\def\qed{\hbox{${\vcenter{\vbox{                        
   \hrule height 0.4pt\hbox{\vrule width 0.4pt height 6pt
   \kern5pt\vrule width 0.4pt}\hrule height 0.4pt}}}$}}
\newtheorem{theorem}{{\bf Theorem}}[section]
\newtheorem{proposition}{{\bf Proposition}}[section]
\newtheorem{remark}{{\bf Remark}}[section]
\newcommand{\reg}{p}
\renewcommand{\O}{\Omega}
\newcommand{\dx}{\,\mbox{d}x}
\def\wp{{w}_{\pi}}
\newcommand\E{{E}}
\newcommand\DD{\mathcal D}
\newcommand\Th{{\mathcal T}_h}
\newcommand\Eh{{\mathcal E}_h}
\newcommand\Pp{{\mathbb P}}
\newcommand{\calG}{ \mathcal{G}}
\newcommand\R{\mathbb{R}}
\numberwithin{equation}{section}
\def\PP{{\mathbb P}}  
\def\O{\Omega}
\def\Th{{\mathcal T}_h}
\def\E{K}
\def\R{{\mathbb R}}
\def\VV{{\boldsymbol \xi}}
\begin{document}

\title{Virtual Element Method for fourth order problems: $L^2-$estimates}

\author[uno]{Claudia Chinosi}
\author[due]{L. Donatella Marini}
\address[uno]{Dipartimento di Scienze e Innovazione Tecnologica, Universit\`a del Piemonte Orientale,  Viale Teresa Michel 11, 15100, Alessandria, Italy email: {chinosi@uniupo.it}}

\address[due]{Dipartimento di Matematica, Universit\`a di Pavia, and IMATI-CNR, Via Ferrata 1, 27100, Pavia, Italy email: 
{marini@imati.cnr.it}}

\begin{abstract}
We analyse the family of $C^1$-Virtual Elements introduced in \cite{Brezzi:Marini:plates} for fourth-order problems and prove optimal estimates in $L^2$ and in $H^1$ via classical duality arguments.
 \end{abstract}
\begin{keyword}{ Fourth-order problems problems; $C^1$-Virtual Elements}
\end{keyword}

\maketitle
\section{Introduction}
%
The Virtual Element Method (VEM), introduced in \cite{volley} and further developed in \cite{projectors}, can be seen as the extension of the Finite Element Method (FEM) to decompositions into almost arbitrary polygons and/or polyhedra. Since the first paper in 2013 (\cite{volley}) the Virtual Element approach has been applied to a number of applications: linear elasticity in two and three dimensions (\cite{VEM-elasticity} and \cite{Paulino-VEM}, respectively), general advection-diffusion-reaction problems, both in primal \cite{variable-primal} and in mixed form \cite{mixed-variables}, Helmholtz problem \cite{Helmo-PPR}, and plate bending problems  in the Kirchhoff-Love formulation \cite{Brezzi:Marini:plates}. In \cite{Brezzi:Marini:plates} a family of elements was constructed and analysed, showing the ductility of the approach to design $C^1$-elements. Optimal convergence rates were proved in the energy norm, i.e., in $H^2$. Namely, order $k-1$, with $k\ge 2$ whenever the discrete space $V_h$ contains locally polynomials of degree $k$.
In the present paper we prove optimal estimates also in $H^1$ and in $L^2$, obtained via classical duality arguments, and we provide numerical results confirming the theoretical estimates. 

We point out that the use of $C^1$-approximations is of interest not only for plate bending problems, although the family of elements we are dealing with was originally introduced having in mind plates. In many other applications the presence of fourth order operators calls for higher continuity. For example, Cahn-Hilliard  equation for phase separation, or Navier-Stokes equations in the stream-vorticity formulation contain the biharmonic operator, exactly as in plate bending problems, which we will refer to throughout the paper.

An outline of the  paper is as follows. In Section \ref{PBcont} we state the continuous problem and fix some notation.  In Section \ref{PBdiscr} we recall the VEM-approximation and the convergence result given in \cite{Brezzi:Marini:plates}. In particular, in Subsection \ref{load} we propose a different approximation of the loading term, more suited for deriving optimal estimates in $L^2$ and $H^1$. In Sections \ref{est-H1} and \ref{est-L2} we prove error estimates in $H^1$ and in $L^2$, respectively. Numerical results are presented in Section \ref{num-tests}, and a comparison with the classical Clough-Tocher and Reduced-Clough-Tocher finite elements  is carried out. 

Throughout the paper we shall use the common notation for the Sobolev spaces
$H^m(\DD)$ for $m$ a non-negative integer and $\DD$ an open bounded domain. In
particular (see e.g. \cite{Lions:Magenes:1968}, \cite{Ciarlet-78}) the $L^2(\DD)$ scalar product and norm
will be indicated by $(\cdot,\cdot)_{0,\DD}$ or $(\cdot,\cdot)_{\DD}$ and
$\|\cdot\|_{0,\DD}$ or $\|\cdot\|_{\DD}$, respectively. 
When $\DD\equiv\O$ the subscript $\DD$ will often be omitted. Finally, $\PP_k$ will denote the space of polynomials of degree $\le k$, with the convention that $\PP_{-1}=\{0\}$, and $C$ will denote a positive constant independent of the mesh size $h$.
\section{The Continuous Problem}\label{PBcont}
Let $\O\subset \R^2$ be a convex polygonal domain occupied by the plate, let $\Gamma$ be its boundary,
and let $f\in L^2(\O)$ be a transversal load acting on the plate.
The Kirchoff-Love model for thin plates (see e.g. \cite{Ciarlet-78}) corresponds to look for the transversal
displacement $w$, the solution of
\begin{equation}\label{biharm}
D \Delta^2\,w~=~f\qquad \mbox{in } \O,
\end{equation}
where $D=E t^3/12(1-\nu^2)$ is the bending rigidity, $t$  the thickness,
$E$  the Young modulus, and $\nu$ the Poisson's ratio.
Assuming for instance the plate to be clamped all over the boundary, equation
\eqref{biharm} is supplemented with the boundary conditions
\begin{equation}\label{BC}
w=\frac{\partial w}{\partial n}=0 \qquad \mbox{on } \Gamma.
\end{equation}
The variational formulation of \eqref{biharm}-\eqref{BC} is:
\begin{equation}\label{pb-cont}
\left\lbrace{
\begin{aligned}
&\mbox{Find } w\in V:=H^2_0(\O) \mbox{ solution of}\\
&a(w,v)=(f,v)_0\quad \forall v\in H^2_0(\O),
  \end{aligned}
  } \right.
  \end{equation}
where the energy bilinear form $a(\cdot,\cdot)$ is given by
  \begin{equation}\label{form-a}
  a(w,v)=D\Big[(1-\nu)\int_{\O}w_{/ij}v_{/ij} \dx +
\nu \int_{\O}\Delta w \Delta v \dx\Big].
  \end{equation}
   In \eqref{form-a} $v_{/i}=\partial v/\partial x_i,~i=1,2$,
and we used the summation
convention of repeated indices. 
   Setting $\|v\|_V:=|v|_{2,\O}$, it is easy to see that, thanks to the boundary conditions in $V$ and to the Poincar\'e inequality, this is indeed {\it a norm}
   on $V$. Moreover
   \begin{equation}\label{cont-a}
   \exists\, M>0\quad \mbox{such that }\quad a(u,v)\le M \|u\|_V\|v\|_V\quad u,v \in V,
   \end{equation}
   \begin{equation}\label{ellipt-a}
   \exists\, \alpha>0\quad \mbox{such that }\quad a(v,v)\ge \alpha \|v\|^2_V\quad v \in V.
   \end{equation}
   Hence, \eqref{pb-cont} has a unique solution, and (see, e.g. \cite{Lions:Magenes:1968})
  \begin{equation}
  \| w \|_V \le C \|f\|_{L^2(\O)}.
  \end{equation}

\section{Virtual Element discretization}\label{PBdiscr}
We recall the construction of the family of elements given in \cite{Brezzi:Marini:plates}, and the estimates there obtained.
The family of elements depends on three integer indices $(r,~s,~m)$,
related to the degree of accuracy $k\ge 2$ by:
 \begin{equation}\label{degrees-r-s-m}
  r=\max\{ 3,k\},\quad s=k-1,\quad m=k-4.
 \end{equation}
Let $\Th$ be a decomposition of $\O$ into polygons $\E$, and let $\Eh$ be the set of edges in $\Th$. We denote by $h_\E$ the diameter of $\E$, i.e., the maximum distance between any two vertices of $\E$.
On $\Th$ we make the following assumptions (see e.g. \cite{volley}):

\noindent
{\bf H1} there exists a fixed number $\rho_0>0$, independent of $\Th$, such that for every element $\E$ (with diameter $h_\E$) it holds 

\noindent
{\it i)} $\E$ is star-shaped with respect to all the points of a ball of radius $\rho_0\,h_\E$, and 

\noindent
{\it ii)} every edge $e$ of $\E$ has length $|e|\ge \rho_0\,h_\E$. 

\subsection{Definition of the discrete space $V_h$}
On a generic polygon $\E$ we define the local virtual element space as
\begin{equation*}\label{space-local}
V(\E):=\{v\in H^2(\E): ~v_{|e}\in\Pp_r(e),~\frac{\partial v}{\partial n}_{|e}\in \Pp_s(e)~\forall e\in \partial{\E},~\Delta^2 v\in \Pp_{m}(\E)\}.
\end{equation*}
Then the global space $V_h$ is given by
\begin{equation}\label{def:Vh}
V_h=\{v\in V:  v_{|\E}\in V(\E),~\forall \E \in \Th \}.
\end{equation}
A function in $V_h$ is uniquely identified by the following degrees of freedom:
\begin{eqnarray}
&&\!\!\!\!\!\!\!\bullet\mbox{ The value of }v(\VV)
 \quad \forall ~vertex~ \VV \label{vertices}
 \\
 &&\!\!\!\!\!\!\!\bullet
 \mbox{ The values of }  v_{/1}(\VV)\mbox{ and } v_{/2}(\VV)
 \quad \forall ~vertex~ \VV \label{gradvertices}\\
&&\!\!\!\!\!\!\!\bullet \mbox{ For } r>3,
\mbox{ the moments }\displaystyle{\int_e q(\xi)v(\xi){\rm d}\xi} \quad
\forall q\in  \Pp_{k-4}(e) , 
 ~ \forall e \in \Eh \label{edges}\\
&&\!\!\!\!\!\!\!\bullet\mbox{ For } s>1, \mbox{ the moments }
\displaystyle{\int_e q(\xi)v_{/n}(\xi){\rm d}\xi}
\quad \forall q\in \Pp_{s-2}(e) , 
~ \forall e \in \Eh  \label{normal}\\
&&\!\!\!\!\!\!\!\bullet\mbox{ For } m\ge 0, \mbox{ the moments }
\displaystyle{\int_{\E} q(x)v(x)\dx} \quad \forall q\in \Pp_m(\E) \quad \forall \E. 
\label{interior}
\end{eqnarray}
\begin{proposition}\label{dofbordo}In each element $\E$ the d.o.f. \eqref{vertices}--\eqref{interior} are unisolvent. Moreover, \eqref{vertices}, \eqref{gradvertices}, and \eqref{edges} uniquely determine a polynomial of degree $\le r$ on each edge of $\E$, the degrees of freedom \eqref{gradvertices} and \eqref{normal} uniquely determine a polynomial of degree $\le s$ on each edge of $\E$, and the d.o.f. \eqref{interior} are equivalent to prescribe $\Pi^{0}_m v$ in $\E$, where, for $m$ a nonnegative integer,
 \begin{equation}\label{defPm}
\Pi^{0}_m v \rm{ ~is ~the~ } L^2(\E)-\rm{projector ~operator~ onto~ the~ space ~}\Pp_m(\E).
\end{equation}
 \end{proposition}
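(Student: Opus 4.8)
The plan is to dispatch the three ``moreover'' assertions first---two are one-dimensional Hermite arguments on a single edge, the third is a restatement of a projection---and then feed them into an elliptic argument for the global unisolvence. \emph{Edge trace.} Fix an edge $e$ with endpoints $\VV_1,\VV_2$ and restrict $v$ to $e$, a polynomial of degree $\le r$ carrying $r+1$ parameters. From \eqref{vertices} I read the two endpoint values and from \eqref{gradvertices} the two endpoint tangential derivatives $\nabla v(\VV_i)\cdot\vect{t}_e$; recalling $r=\max\{3,k\}$ and the convention $\Pp_{-1}=\{0\}$, the moments in \eqref{edges} against $\Pp_{k-4}(e)$ (present only when $r>3$) bring the count to $4+(k-3)_+=r+1$. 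To see these functionals are unisolvent on $\Pp_r(e)$ I would check that a $v|_e$ annihilating all of them vanishes: zero endpoint value and tangential derivative make $\VV_1,\VV_2$ double roots, so $v|_e=(\xi-\xi_1)^2(\xi-\xi_2)^2\,p$ with $p\in\Pp_{r-4}(e)=\Pp_{k-4}(e)$, and testing \eqref{edges} with $q=p$ gives $\int_e (\xi-\xi_1)^2(\xi-\xi_2)^2 p^2\,{\rm d}\xi=0$, forcing $p\equiv0$ by nonnegativity of the integrand.

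\emph{Normal trace and interior moments.} The claim for $\partial v/\partial n|_e\in\Pp_s(e)$ is the same argument one degree lower: \eqref{gradvertices} supplies the two endpoint normal derivatives $\nabla v(\VV_i)\cdot\n$ and \eqref{normal} the $s-1$ moments against $\Pp_{s-2}(e)$ (present only when $s>1$), so again the count is $2+(s-1)_+=s+1$; a vanishing datum gives $\partial_n v|_e=(\xi-\xi_1)(\xi-\xi_2)\,p$ with $p\in\Pp_{s-2}(e)$, and the moment test produces an integral of $p^2$ against the sign-definite factor $(\xi-\xi_1)(\xi-\xi_2)$, forcing $p\equiv0$. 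The statement on \eqref{interior} is immediate from \eqref{defPm}: expanding $\Pi^0_m v$ in a basis of $\Pp_m(\E)$, its coefficients are linear combinations of (and conversely determine) the numbers $\int_\E q\,v\dx$, $q\in\Pp_m(\E)$, which are exactly \eqref{interior}.

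\emph{Unisolvence.} I would prove that $v\mapsto(\text{d.o.f.})$ is an isomorphism on $V(\E)$, injectivity being the core. If all of \eqref{vertices}--\eqref{interior} vanish, the edge claims give $v|_{\partial\E}=0$ and $\partial v/\partial n|_{\partial\E}=0$, i.e. $v\in H^2_0(\E)$; since $v\in V(\E)$ we have $\Delta^2 v\in\Pp_m(\E)$, so testing \eqref{interior} with $q=\Delta^2 v$ gives $\int_\E v\,\Delta^2 v\dx=0$. Two integrations by parts, whose boundary terms vanish because $v=\partial_n v=0$ on $\partial\E$, turn this into $\|\Delta v\|_{0,\E}^2=0$; hence $\Delta v=0$, and a harmonic function with zero trace is zero, so $v\equiv0$. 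To close the argument I match dimensions through the clamped biharmonic map $v\mapsto(v|_{\partial\E},\,\partial v/\partial n|_{\partial\E},\,\Delta^2 v)$, a bijection of $V(\E)$ onto its admissible triples (uniqueness is the identity just used, existence is solvability of the clamped plate problem with polynomial data). The vertex values in \eqref{gradvertices} make $\nabla v$ single-valued at each vertex, which is precisely the compatibility rendering a boundary datum admissible, so the edge claims identify the admissible pairs $(v|_{\partial\E},\partial_n v|_{\partial\E})$ with the boundary d.o.f. \eqref{vertices}, \eqref{gradvertices}, \eqref{edges}, \eqref{normal}, while $\Delta^2 v$ ranges over $\Pp_m(\E)$, matched to \eqref{interior} as above. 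Thus $\dim V(\E)$ equals the number of d.o.f., and with injectivity this gives the isomorphism.

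I expect the one genuinely delicate point to be this last compatibility bookkeeping---verifying that every assignment of the vertex and edge d.o.f. produces boundary data for an actual $H^2(\E)$ function, so that the boundary d.o.f. count really equals $\dim\{(v|_{\partial\E},\partial_n v|_{\partial\E})\}$. The elliptic identity $\int_\E v\,\Delta^2 v\dx=\|\Delta v\|_{0,\E}^2$ that powers injectivity is clean; all the care sits in the trace/vertex matching and in the solvability of the biharmonic problem underlying the dimension count.
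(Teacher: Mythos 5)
The paper itself states Proposition~\ref{dofbordo} without proof: it is recalled from \cite{Brezzi:Marini:plates}, so there is no in-paper argument to compare against. Judged on its own, your proof is correct, and it follows the same standard route used in that reference: Hermite-type counting on each edge with the sign-definite weight trick, the identification of \eqref{interior} with $\Pi^0_m v$, and the clamped-plate energy argument for injectivity. Your edge counts $4+(k-3)_+=r+1$ and $2+(s-1)_+=s+1$ are right for $r=\max\{3,k\}$, $s=k-1$ (the convention $\Pp_{-1}=\{0\}$ correctly absorbs the cases $r=3$ and $s=1$), and the key identity $\int_{\E} v\,\Delta^2 v\dx=\|\Delta v\|^2_{0,\E}$ for $v\in H^2_0(\E)$ with $\Delta^2 v\in\Pp_m(\E)$ is valid (approximate $v$ by compactly supported smooth functions), so all d.o.f.\ vanishing indeed forces $v\equiv 0$. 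The only step you assert rather than prove is the one you flag yourself: that every vertex-compatible pair of piecewise polynomials $(g,h)$ on $\partial\E$ is the trace pair $(v_{|\partial\E},\partial v/\partial n_{|\partial\E})$ of some $H^2(\E)$ function, which is what makes the number of boundary d.o.f.\ equal to the dimension of the admissible boundary data in your dimension count. This is a classical fact on polygons (see \cite{Grisvard}; alternatively one can construct a $C^1$ piecewise-polynomial lifting of Clough--Tocher type on a subtriangulation of $\E$), and it is taken for granted at the same level of detail in \cite{Brezzi:Marini:plates}, so your argument is complete at the customary standard of rigor for this result.
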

\begin{remark}\label{interperr} We recall that our assumptions on $\Th$ allow to define, for every smooth enough $w$, an ``interpolant'' in $ V_h$ with the right interpolation properties. More precisely, if $g_i(w),~i=1,2,...\calG$ are the global d.o.f. in $V_h$, there exists a unique element $w_I\in V_h$ such that
 \begin{equation}\label{definterp}
 g_i(w-w_I)=0\qquad\forall i=1,2,....\calG.
 \end{equation}
 Moreover, by the usual Bramble-Hilbert technique (see e.g. \cite{Ciarlet-78}) and scaling arguments 
 (see e.g. \cite{Bo-Bre-For})  we can prove that
 \begin{equation}\label{errinterp}
 \|w-w_I\|_{s,\O}\le \,C\,h^{\beta-s}\,|w|_{\beta,\O}\qquad
s=0,1,2,\quad 3\le \beta\le k+1
 \end{equation}
(with $C>0$ independent of $h$) as in the usual Finite Element framework.
\end{remark}

\subsection{Construction of $a_h$} \label{costrah}
We need to define a symmetric discrete bilinear form which is stable and consistent. More precisely,  denoting by $a^{\E}_h(\cdot,\cdot)$ the restriction of $a_h(\cdot,\cdot)$ to a generic element $\E$, the following properties must be satisfied (see \cite{volley}).
For all $h$, and for all  ${\E}$ in $\Th$,  
\begin{itemize}
\item {\it k-Consistency}: $\forall p_k\in \Pp_k,\,\forall v\in V(\E)$
\begin{equation}\label{basic}
a_h^{\E}(p_k,v)=a^{\E}(p_k,v).
\end{equation}
\item {\it Stability}: $\exists$ two positive constants $\alpha_*$ and $\alpha^*$, independent of $h$ and of $\E$,  such that
    \begin{equation}\label{ahanda}
    \forall v\in V(\E)\qquad \alpha_*\,a^{\E}(v,v)\le a_h^{\E}(v,v)
    \le \alpha^*\,a^{\E}(v,v).
    \end{equation}
\end{itemize}
The symmetry of $a_h$, \eqref{ahanda}
and the continuity of $a^{\E}$ imply 
the continuity of $a^{\E}_h$:
\begin{equation}\label{CSahE}
\begin{aligned}
a_h^{\E}(u,v)&\le\,\Big(a_h^{\E}(u,u)\Big)^{1/2}\,\Big(a_h^{\E}(v,v)\Big)^{1/2}
\\
&\le\,\alpha^*\,M\,\|u\|_{2,\E}\,\|v\|_{2,\E}\quad \mbox{for all $u$ and  $v$ in $V(\E)$}.
\end{aligned}
\end{equation}
In turn, \eqref{ahanda} and \eqref{CSahE} easily imply
\begin{equation}\label{ahandaO}
    \forall v\in V_h\qquad \alpha_*\,a(v,v)\le a_h(v,v)
    \le \alpha^*\,a(v,v),
    \end{equation}
and
\begin{equation}\label{CSahEO}
a_h(u,v)\le\,\alpha^*\,M\,\|u\|_{V}\,\|v\|_{V}\quad \mbox{for all $u$ and  $v$ in $V_h$}.
\end{equation}
Next, let $\Pi^{\E}_{k}: {V(\E)}\longrightarrow \Pp_k(\E)\subset {V(\E)}$ be the operator defined as the solution of
\begin{equation}\label{def:Pi}
\left\lbrace{
\begin{aligned}
&a^{\E}(\Pi^{\E}_{k} \psi, q)=a^{\E}(\psi,q)\qquad \forall \psi\in {V(\E)},~\forall q\in \Pp_k(\E)\\
&\int_{\partial\E}(\Pi^{\E}_{k} \psi-\psi)=0,\quad \int_{\partial\E}\nabla (\Pi^{\E}_{k} \psi-\psi)=0.
\end{aligned}
} \right.
\end{equation} 
We note that for $v\in \Pp_k(\E)$ the first equation in \eqref{def:Pi} implies $(\Pi^{\E}_{k}v)_{/ij} =v_{/ij}$ for $i,j=1,2$, that joined with the second equation gives easily
\begin{equation}\label{uffa}
\Pi^{\E}_{k} v=v\quad \forall v\in \PP_k(\E).
\end{equation}
Hence, $\Pi^{\E}_{k}$ is a projector operator onto $\PP_k(\E)$.
Let then $S^{\E}(u,v)$ be a symmetric positive definite bilinear form, verifying
\begin{equation}\label{defStab}
 c_0\,a^{\E}(v,v)\le S^{\E}(v,v)
    \le c_1\,a^{\E}(v,v),\qquad \forall v\in {V(\E)} \mbox{ with } \Pi^{\E}_{k}v=0,
    \end{equation}
for some positive constants $c_0,~c_1$ independent of $\E$ and $h_{\E}$.  We refer to \cite{Brezzi:Marini:plates} for a precise choice of $S^{\E}(u,v)$. We just recall that $S^{\E}(u,v)$ can simply be taken as the euclidean scalar product associated to the degrees of freedom, properly scaled to satisfy \eqref{defStab}. Then set
\begin{equation}\label{def-ah}
a^{\E}_h(u,v):= a^{\E}(\Pi^{\E}_{k} u, \Pi^{\E}_{k} v)+ S^{\E}(u-\Pi^{\E}_{k} u,v-\Pi^{\E}_{k} v).
\end{equation}
Clearly the bilinear form \eqref{def-ah} verifies both the consistency property \eqref{basic} and the stability property \eqref{ahanda}.

We postpone the construction of the right-hand side, and recall the convergence result of \cite{Brezzi:Marini:plates}.
\begin{theorem}\label{teoconv} Under  assumptions {\bf H1} on the decomposition the discrete problem:
\begin{equation}\label{Vem-pb}
\left\lbrace{
\begin{aligned}
&\mbox{Find } w_h\in V_h \mbox{ solution of}\\
&a_h(w_h,v_h)=<f_h,v_h>\quad \forall v_h\in V_h
  \end{aligned}
  } \right.
  \end{equation}
has a unique solution $w_h$. Moreover, for every
approximation $w_I$ of $w$ in $V_h$ and for
every approximation $\wp$ of $w$ that is piecewise in $\Pp_k$, we have
\begin{equation*}
\| w-w_h \|_{V} \,\le C\Big(\|w-w_I\|_{V}+\|w-\wp\|_{h,V}+\|f-f_h\|_{V_h^\prime}
\Big)
\end{equation*}
where $C$ is a constant depending only on
$\alpha$, $\alpha_*$, $\alpha^*$, $M$ and, with the usual notation, the norm in
$V_h^\prime$ is defined as
\begin{equation}\label{defrhs}
\|f-f_h\|_{V^\prime_h}:=\,\sup_{v_h\in V_h}\frac{<f-f_h,v_h>}{\| v_h\|_{V}}.
\end{equation}
\end{theorem}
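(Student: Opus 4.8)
The plan is to establish well-posedness by Lax--Milgram and then derive the error bound through a Strang-type argument, the decisive leverage being the $k$-consistency \eqref{basic} that repairs the variational crime $a_h\neq a$.

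First I would check that \eqref{Vem-pb} is well posed. Combining the lower stability bound in \eqref{ahandaO} with the ellipticity \eqref{ellipt-a} gives $a_h(v_h,v_h)\ge \alpha_*\,\alpha\,\|v_h\|_V^2$ for every $v_h\in V_h$, while \eqref{CSahEO} supplies continuity; since $\langle f_h,\cdot\rangle$ is a bounded linear functional on $V_h$, Lax--Milgram yields a unique solution $w_h$. For the estimate, let $w_I\in V_h$ be the chosen approximation and set $v_h:=w_h-w_I\in V_h$. Using the coercivity just recalled and then the discrete equation \eqref{Vem-pb},
\begin{equation*}
\alpha_*\,\alpha\,\|v_h\|_V^2 \le a_h(v_h,v_h)=\langle f_h,v_h\rangle - a_h(w_I,v_h).
\end{equation*}

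The crucial step is to insert the piecewise polynomial $\wp$: I write $a_h(w_I,v_h)=a_h(w_I-\wp,v_h)+a_h(\wp,v_h)$ and invoke the element-wise consistency \eqref{basic}, which, summed over $\Th$, gives $a_h(\wp,v_h)=a(\wp,v_h)$ because $\wp$ is locally in $\Pp_k$. I then add and subtract $w$, and exploit that $v_h\in V_h\subset V=H^2_0(\O)$ so that the continuous problem \eqref{pb-cont} yields $a(w,v_h)=(f,v_h)_0$. Collecting the terms leaves
\begin{equation*}
a_h(v_h,v_h)=\langle f_h-f,v_h\rangle - a_h(w_I-\wp,v_h) - a(\wp-w,v_h).
\end{equation*}
It remains to bound the three right-hand contributions, each by $C\,\|v_h\|_V$ times, respectively, $\|f-f_h\|_{V_h^\prime}$ (directly from the definition \eqref{defrhs}), $\|w_I-\wp\|_{h,V}$ (from the element-wise continuity \eqref{CSahE} summed with a discrete Cauchy--Schwarz inequality), and $\|\wp-w\|_{h,V}$ (from the broken continuity of $a^{\E}$). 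Since $\|w_I-\wp\|_{h,V}\le \|w-w_I\|_V+\|w-\wp\|_{h,V}$, dividing by $\|v_h\|_V$ and combining with the triangle inequality $\|w-w_h\|_V\le \|w-w_I\|_V+\|v_h\|_V$ produces exactly the asserted bound.

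The main obstacle is precisely this variational crime: because $a_h$ differs from $a$ and $\wp$ is only piecewise polynomial (hence in general not $C^1$, so not in $V_h$), the result cannot follow from a plain C\'ea inequality. The consistency \eqref{basic} is what rescues the argument, allowing the nonconforming form $a_h$ applied to the polynomial part to be traded for the exact form $a$; the accompanying bookkeeping is the systematic replacement of the global norm by the broken norm $\|\cdot\|_{h,V}$ in every term where $\wp$ appears, and the careful tracking of the stability constants $\alpha_*,\alpha^*$ and continuity constant $M$ so that the final $C$ depends only on $\alpha$, $\alpha_*$, $\alpha^*$, $M$.
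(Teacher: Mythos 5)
Your proposal is correct and follows essentially the same route as the paper's source: the paper only recalls this theorem from \cite{Brezzi:Marini:plates}, where the proof is exactly this argument --- coercivity of $a_h$ from \eqref{ahandaO} and \eqref{ellipt-a} plus Lax--Milgram for well-posedness, then the Strang-type estimate on $w_h-w_I$ using the $k$-consistency \eqref{basic} to trade $a_h^{\E}(\wp,\cdot)$ for $a^{\E}(\wp,\cdot)$, the continuous equation \eqref{pb-cont}, the definition \eqref{defrhs}, and elementwise continuity \eqref{CSahE} with discrete Cauchy--Schwarz. No gaps: your handling of the broken norm for the piecewise-polynomial $\wp$ and the final triangle inequalities matches the standard treatment.
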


\subsection{Construction of the right-hand side}\label{load}
In order to build the loading term $<f_h,v_h>$ for $v_h\in V_h$ in a simple and easy way it is convenient to have internal degrees of freedom in $V_h$, and this means, according to \eqref{degrees-r-s-m} and \eqref{interior}, that $k\ge 4$ is needed.  In \cite{Brezzi:Marini:plates} suitable choices were made for different values of $k$, enough to guarantee the proper order of convergence in $H^2$. Namely,
\begin{equation}\label{stima-H2}
\| w - w_h \|_V \le C\, h^{k-1} \|w\|_{k+1}.
\end{equation}
In order to derive optimal estimates in $L^2$ and $H^1$ we need to make different choices. To this end, following \cite{projectors}, we modify the definition \eqref{def:Vh} of $V_h$. For $k\ge 2$,  and $r$ and $s$ related to $k$ by \eqref{degrees-r-s-m}, let $W^k(\E)$ be the new local space, given by
\begin{equation*}
W^k(\E):=\{v\in H^2(\E): ~v_{|e}\in\Pp_r(e),~\frac{\partial v}{\partial n}_{|e}\in \Pp_s(e)~\forall e\in \partial{\E},~\Delta^2 v\in \Pp_{k-2}(\E)\}. 
\end{equation*}
For $k=2$ we define the new global space as
\begin{equation}\label{def:enhanced-2}
W^2_h=\{v\in V: v_{|\E}\in W^2(\E),
\mbox{and }\int_{\E} v \, \dx = \int_{\E} \Pi^{\E}_k v \, \dx ~\forall \E \in \Th\},
\end{equation}
and for $k\ge 3$
\begin{equation}\label{def:enhanced}
\begin{array}{ll}
W^k_h&=\{v\in V: v_{|\E}\in W^k(\E),~\mbox{and}\\[3mm]
&\hskip1cm \int_{\E} v \,p_{\alpha} \dx = \int_{\E} \Pi^{\E}_k v \,p_{\alpha} \dx,~\alpha=k-3,k -2~\forall \E \in \Th\}.
\end{array}
\end{equation}
In \eqref{def:enhanced} $p_{\alpha}$ are homogeneous polynomials of degree $\alpha$.
It can be checked that the d.o.f. \eqref{vertices}--\eqref{interior} are the same, but the added conditions on the moments allow now to compute the $L^2-$projection of any $v\in W^k_h$ onto $\Pp_{k-2}(\E)~\forall \E$, and not only onto $\Pp_{k-4}(\E)$ as before. Taking then $f_h=L^2-$projection of $f$ onto the space of piecewise polynomials of degree $k-2$, 
that is,
$$
f_h = \Pi^0_{k-2} f \ \qquad \mbox{on each }
\E \in\Th ,
$$
the right-hand side in \eqref{Vem-pb} can be exactly computed:
\begin{equation*}
\begin{aligned}
<f_h,v_h> &= \sum_{{\E}\in\Th} \int_{\E} f_h\, v_h \dx\equiv
\sum_{{\E}\in\Th} \int_{\E} (\Pi^0_{k-2}f)\, v_h \dx\\
&=
\sum_{{\E}\in\Th} \int_{\E} f\, (\Pi^{0}_{k-2}v_h) \dx .
\end{aligned}
\end{equation*}
Moreover, standard $L^2$ orthogonality and approximation estimates 
yield
\begin{equation}\label{estk2}
\begin{array}{ll}
<f_h,v_h>-(f, v_h) &= \displaystyle{\sum_{\E\in\Th} \int_{\E} (\Pi^{0}_{k-2}f-f) (v_h - \Pi_{k-2}^{0} v_h) \dx}\\
&\displaystyle{\le C \sum_{\E\in\Th}h_{\E}^{k-1} |f|_{k-1,\E} \,\|v_h - \Pi_{k-2}^{0} v_h\|_{0,\E}.}
\end{array}
\end{equation}

\section{Estimate in $H^1$}\label{est-H1}
We shall use duality arguments, both for deriving estimates in $H^1$ and in $L^2$. In view of this,  let us recall some regularity results   for the problem
\begin{equation}\label{dual-general}
D \Delta^2 \psi = g \qquad \mbox{in } \O, \quad \psi=\psi_{/n}=0\qquad\mbox{on } \partial \O .
\end{equation}
Since $\O$ is a convex polygon, it holds (see \cite{Grisvard})
\begin{equation}\label{regularity-1}
g\in H^{-1}(\O) \Longrightarrow~\psi\in H^3(\O),  \quad \|\psi\|_{3} \le C\, \|g\|_{-1},
\end{equation}
and
\begin{equation}\label{regularity-2}
\begin{aligned}
&\exists\,  \reg \mbox{ with } 0<\reg\le1 \mbox{ such that}\\
&g\in L^{2}(\O) \Longrightarrow~\psi\in H^{3+\reg}(\O),  \quad\|\psi\|_{3+\reg} \le C\, \|g\|_{0} .
\end{aligned}
\end{equation}
The value of $\reg$ depends on the maximum angle in $\O$. Moreover, there exists a $\theta_0 <\pi$ such that, for all $\theta \le \theta_0$ it holds $\reg=1$, thus giving $\psi \in H^4(\O)$.

We shall prove the following result.
\begin{theorem}\label{stima-in-H1}
Let $w$ be the solution of \eqref{pb-cont}, and let $w_h$ be the solution of \eqref{Vem-pb}. Then
\begin{equation}\label{stima-H1}
|w-w_h|_1 \le C h^k ( |w|_{k+1} +(\sum_{\E\in\Th} |f|_{k-1,\E}^2)^{1/2}),
\end{equation}
with $C$ a positive constant independent of $h$.
\end{theorem}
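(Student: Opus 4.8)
The plan is to use a duality (Aubin--Nitsche) argument, exploiting that the dual fourth-order problem with $H^{-1}$ data gains three derivatives. Write $e:=w-w_h$ and recall that, since $e\in H^2_0(\O)$, one has $|e|_1=\|\nabla e\|_0=\sup\{(\nabla e,\g)_0 : \g\in L^2(\O)^2,\ \|\g\|_0\le 1\}$. Fix such a $\g$ and let $\psi\in H^2_0(\O)$ solve the dual problem $a(\psi,v)=(\nabla v,\g)_0$ for all $v\in H^2_0(\O)$, i.e. $D\Delta^2\psi=-\div\g=:g$ with clamped boundary conditions. Since $g\in H^{-1}(\O)$, the regularity result \eqref{regularity-1} gives $\psi\in H^3(\O)$ with $\|\psi\|_3\le C\|g\|_{-1}\le C\|\g\|_0\le C$. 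Taking $v=e$ in the dual problem and using symmetry yields the representation $(\nabla e,\g)_0=a(\psi,e)=a(e,\psi)$, so it suffices to bound $a(e,\psi)$ by $Ch^k(\cdots)$ uniformly in $\g$.

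First I would split $a(e,\psi)=a(e,\psi-\psi_I)+a(e,\psi_I)$, where $\psi_I$ is the interpolant of $\psi$ in the discrete space (Remark \ref{interperr}). The first term is immediate: by the continuity \eqref{cont-a}, the interpolation estimate \eqref{errinterp} with $s=2$, $\beta=3$ (legitimate since $\psi\in H^3$ and $k\ge 2$), and the energy estimate $|e|_2\le Ch^{k-1}(|w|_{k+1}+(\sum_\E|f|_{k-1,\E}^2)^{1/2})$ following from Theorem \ref{teoconv}, \eqref{errinterp} and \eqref{estk2} (the load contribution being of higher order),
\begin{equation*}
a(e,\psi-\psi_I)\le M\,|e|_2\,|\psi-\psi_I|_2\le C\,h^{k-1}\big(|w|_{k+1}+(\textstyle\sum_\E|f|_{k-1,\E}^2)^{1/2}\big)\,h\,|\psi|_3,
\end{equation*}
which is of the desired order $h^k$ since $|\psi|_3\le C$.

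The core of the argument is the consistent term $a(e,\psi_I)$. Using the continuous equation $a(w,\psi_I)=(f,\psi_I)$ and the discrete equation $a_h(w_h,\psi_I)=<f_h,\psi_I>$, I would write $a(e,\psi_I)=\big[(f,\psi_I)-<f_h,\psi_I>\big]+\big[a_h(w_h,\psi_I)-a(w_h,\psi_I)\big]=:T_1+T_2$. For the form-consistency term $T_2$, introduce a piecewise polynomial $\wp$ of degree $k$ approximating $w$ and a piecewise polynomial $\psi_\pi$ of degree $k$ approximating $\psi$; applying the $k$-consistency \eqref{basic} element by element (in the first argument against $\wp$, then in the second against $\psi_\pi$, using symmetry) cancels all polynomial contributions and leaves
\begin{equation*}
T_2=\sum_{\E\in\Th}\Big[a_h^\E(w_h-\wp,\psi_I-\psi_\pi)-a^\E(w_h-\wp,\psi_I-\psi_\pi)\Big].
\end{equation*}
Bounding each summand by the continuity of $a^\E$ and \eqref{CSahE}, then applying Cauchy--Schwarz over the elements, gives $|T_2|\le C\|w_h-\wp\|_{h,V}\,\|\psi_I-\psi_\pi\|_{h,V}$; here $\|w_h-\wp\|_{h,V}\le Ch^{k-1}(\cdots)$ by the triangle inequality and the energy estimate, while $\|\psi_I-\psi_\pi\|_{h,V}\le Ch\,|\psi|_3$ since $\psi_I$ and $\psi_\pi$ each approximate $\psi\in H^3$ to order $h$ in the broken $H^2$-seminorm, whence $|T_2|\le Ch^k(\cdots)$. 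For the load term $T_1$ I would apply \eqref{estk2} with $v_h=\psi_I$ and bound $\|\psi_I-\Pi^0_{k-2}\psi_I\|_{0,\E}\le\|\psi-\psi_I\|_{0,\E}+\|\psi-\Pi^0_{k-2}\psi\|_{0,\E}$; by \eqref{errinterp} the first term is $O(h^3)$ and the $L^2$-projection error of $\psi\in H^3$ is at least $O(h)$ for all $k\ge 2$, so $|T_1|\le Ch^{k-1}\,h\,(\sum_\E|f|_{k-1,\E}^2)^{1/2}=Ch^k(\sum_\E|f|_{k-1,\E}^2)^{1/2}$. Collecting the three bounds and taking the supremum over $\g$ yields \eqref{stima-H1}.

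The main obstacle is the treatment of the consistency error $T_2$ together with the bookkeeping of the powers of $h$: the whole point of the duality argument is to \emph{gain one order} over the energy estimate \eqref{stima-H2}, and this gain hinges on the dual solution being only $H^3$-regular, so that its interpolation and $L^2$-projection errors contribute exactly one extra factor of $h$ against the $O(h^{k-1})$ primal energy error. Care is needed to verify that this single extra power survives uniformly in $k$ --- in particular in the lowest-order case $k=2$, where $\Pi^0_{k-2}=\Pi^0_0$ reproduces only constants and the projection error of $\psi$ is merely $O(h)$; fortunately this is precisely the factor required, so the estimate degrades gracefully.
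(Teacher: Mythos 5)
Your proof is correct and follows essentially the same route as the paper's: a duality argument based on the $H^{-1}\to H^3$ regularity \eqref{regularity-1}, the same splitting of $a(w-w_h,\psi)$ into an interpolation term, a load-consistency term handled via \eqref{estk2}, and a form-consistency term handled by inserting polynomial approximants of $w$ and $\psi$ and using the $k$-consistency \eqref{basic} twice. The only cosmetic differences are that you realize $|w-w_h|_1$ as a supremum over $\g\in L^2(\O)^2$ with dual datum $-\div\g$, whereas the paper takes the specific datum $g=-\Delta(w-w_h)$ so that $|w-w_h|_1^2$ appears directly, and that your $\psi_\pi$ has degree $k$ rather than the paper's degree $2$; neither affects the argument.
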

\begin{proof}
Let $\psi \in H^2_0(\Omega)$ be the solution of \eqref{dual-general} with $g=-\Delta (w-w_h)$:
\begin{equation}\label{dual}
D \Delta^2 \psi = -\Delta (w-w_h) \qquad \mbox{in } \O.
\end{equation}
By \eqref{regularity-1} we have
\begin{equation}\label{stima-reg3}
\|\psi\|_{3} \le C\, \|\Delta (w-w_h)\|_{-1}\le C\,  |w-w_h|_1 .
\end{equation}
Let $\psi_I$ be the interpolant of $\psi$ in $W^2_h$, for which it holds (see \eqref{errinterp})
\begin{equation}\label{stima-interp-psi}
\|\psi -\psi_I\|_m \le C\, h^{3 - m} \|\psi\|_{3},~~m=0,1,2.
\end{equation}
Integrating by parts, using \eqref{dual}, adding and subtracting $\psi_I$, and using \eqref{pb-cont} and \eqref{Vem-pb} we have:
\begin{equation}\label{per-stima-H1}
\begin{array}{ll}
\!\!\!\!\!\!\!\!|w-w_h|^2_1 &= -(\Delta (w-w_h), w-w_h)_0= (D \Delta^2 \psi, w-w_h)_0\\[2mm]
&= a(w-w_h, \psi -\psi_I) + a(w-w_h,\psi_I)\\[2mm]
&=a(w-w_h, \psi -\psi_I)+ [(f,\psi_I)- <f_h,\psi_I> ]\\[2mm]
&+ [a_h(w_h,\psi_I)-a(w_h,\psi_I)] =: T_1 + T_2 + T_3.
\end{array}
\end{equation}
The first term is easily bounded through \eqref{cont-a}, and then \eqref{stima-H2},  \eqref{stima-interp-psi}, and \eqref{stima-reg3}:
\begin{equation}\label{term-T1}
T_1 
\le C h^{k-1} \|w\|_{k+1} h  \|\psi\|_3 
\le C h^{k} \|w\|_{k+1} |w-w_h|_1.
\end{equation} 
For $T_2$ we use \eqref{estk2} with $v_h=\psi_I$. Standard interpolation estimates give
\begin{equation*}
\begin{array}{ll}
\|\psi_I-\Pi^0_{k-2}\psi_I\|_{0,\E}& \le \|\psi_I-\Pi^0_{0}\psi_I\|_{0,\E} \\[2mm]
&\le \|\psi_I-\psi\|_{0,\E} +\|\psi-\Pi^0_{0}\psi\|_{0,\E}+\|\Pi^0_{0}(\psi-\psi_I)\|_{0,\E}\\[2mm]
&\le C h_{\E}\, |\psi|_{1,\E} 
\end{array}
\end{equation*} 
which inserted in \eqref{estk2} gives
\begin{equation}\label{term-T2}
T_2 \le C \,h^{k}(\sum_{\E\in\Th} |f|_{k-1,\E}^2)^{1/2} \,  |w-w_h |_{1}.
\end{equation} 
It remains to estimate $T_3$. Adding and subtracting $\wp(=\Pi^0_k w)$ and using \eqref{basic}, then adding and subtracting $\psi_{\pi}=\Pi^0_2{\psi}$ and using again \eqref{basic} we have 
\begin{equation*}\label{term-T3}
\begin{array}{lll}
T_3&=&\displaystyle{\sum_{\E} (a^{\E}_h(w_h,\psi_I)-a^{\E}(w_h,\psi_I))}\\
&=&\displaystyle{\sum_{\E} (a^{\E}_h(w_h-\wp,\psi_I)+a^{\E}(\wp-w_h,\psi_I))}\\
&=&\displaystyle{\sum_{\E} (a^{\E}_h(w_h-\wp,\psi_I-\psi_{\pi})+a^{\E}(\wp-w_h,\psi_I-\psi_{\pi}))}.
\end{array}
\end{equation*}
From \eqref{CSahEO}, \eqref{cont-a}, standard approximation estimates, \eqref{stima-H2} and \eqref{stima-interp-psi} we deduce
\begin{equation}\label{term-T3}
T_3 \le C\, h^k  |w|_{k+1}|w-w_h|_1.
\end{equation}
Inserting \eqref{term-T1}, \eqref{term-T2}, and \eqref{term-T3} in \eqref{per-stima-H1} we have the result \eqref{stima-H1}
\end{proof}
\section{Estimate in $L^2$}\label{est-L2}
We shall prove the following result.
\begin{theorem}\label{stima-in-L2}
Let $w$ be the solution of \eqref{pb-cont}, and let $w_h$ be the solution of \eqref{Vem-pb}. Then
\begin{equation}\label{stima-L2}
\|w-w_h\|_0 \le C \begin{cases}& \displaystyle{h^{2} \Big( |w|_{3} +(\sum_{\E\in\Th} |f|_{1,\E}^2)^{1/2}\Big) ~~\mbox{for } k=2}\\
&\displaystyle{h^{k+\reg} \Big( |w|_{k+1} +(\sum_{\E\in\Th} |f|_{k-1,\E}^2)^{1/2}\Big) ~~\mbox{for } k\ge 3},
\end{cases}
\end{equation}
with $C>0$ independent of $h$, and $\reg$ the regularity index given in \eqref{regularity-2}.
\end{theorem}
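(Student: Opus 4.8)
The plan is to proceed exactly as in the $H^1$ estimate (Theorem \ref{stima-in-H1}), but using the stronger regularity \eqref{regularity-2} instead of \eqref{regularity-1}, and choosing the dual right-hand side so as to pick out the $L^2$ error. Specifically, I would let $\psi\in H^2_0(\O)$ solve the dual problem \eqref{dual-general} with $g=w-w_h$, i.e. $D\Delta^2\psi=w-w_h$ in $\O$. By \eqref{regularity-2} this gives $\psi\in H^{3+\reg}(\O)$ with $\|\psi\|_{3+\reg}\le C\,\|w-w_h\|_0$, which is the key gain over the $H^1$ case: we now have essentially one extra order of regularity (up to $\reg$) on the dual solution, and correspondingly one extra power of $h$ available in the interpolation estimate \eqref{errinterp} with $\beta=3+\reg$.

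First I would write, testing the dual equation against $w-w_h$ and integrating by parts twice (using $w-w_h\in H^2_0$),
\begin{equation*}
\|w-w_h\|_0^2 = (D\Delta^2\psi,\,w-w_h)_0 = a(w-w_h,\psi).
\end{equation*}
Then I would insert the interpolant $\psi_I$ of $\psi$ in $W^k_h$ and split, in complete analogy with \eqref{per-stima-H1},
\begin{equation*}
\|w-w_h\|_0^2 = a(w-w_h,\psi-\psi_I) + \big[(f,\psi_I)-<f_h,\psi_I>\big] + \big[a_h(w_h,\psi_I)-a(w_h,\psi_I)\big] =: T_1+T_2+T_3.
\end{equation*}
The three terms are handled by the same mechanisms as before. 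For $T_1$ I would use continuity \eqref{cont-a}, the energy bound \eqref{stima-H2} on $\|w-w_h\|_V$, and the interpolation estimate \eqref{errinterp} for the dual, now exploiting $\beta=3+\reg$ to get $\|\psi-\psi_I\|_2\le C\,h^{1+\reg}\|\psi\|_{3+\reg}$, yielding a bound of order $h^{(k-1)+(1+\reg)}=h^{k+\reg}$ (for $k\ge 3$; for $k=2$ the interpolant of $\psi$ lives in $W^2_h$ and only $\beta=3$ is usable, giving $h^2$). For $T_2$ I would invoke \eqref{estk2} with $v_h=\psi_I$ and estimate $\|\psi_I-\Pi^0_{k-2}\psi_I\|_{0,\E}$ by the same telescoping argument used in the $H^1$ proof; here the higher regularity of $\psi$ lets one gain an extra factor $h^{\reg}$ in the approximation of $\psi$, producing the stated $h^{k+\reg}$ rate. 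For $T_3$ I would repeat the add-and-subtract argument with $\wp=\Pi^0_k w$ and $\psi_\pi=\Pi^0_2\psi$, using consistency \eqref{basic}, continuity \eqref{CSahEO}, \eqref{cont-a}, the energy estimate \eqref{stima-H2} and the sharper interpolation of $\psi$.

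Collecting the three bounds gives $\|w-w_h\|_0^2\le C\,h^{k+\reg}\big(|w|_{k+1}+(\sum_\E|f|_{k-1,\E}^2)^{1/2}\big)\,\|w-w_h\|_0$ for $k\ge 3$, and the analogous $h^2$ bound for $k=2$; dividing through by $\|w-w_h\|_0$ yields \eqref{stima-L2}. The main obstacle I anticipate is the careful bookkeeping of powers of $h$ in each term so that the fractional gain $h^{\reg}$ coming from \eqref{regularity-2} is correctly propagated through the interpolation estimates — in particular matching the $\beta$ in \eqref{errinterp} to the available Sobolev regularity $3+\reg$ of $\psi$ and making sure the $k=2$ case degrades gracefully (since $W^2_h$ only affords the $\beta=3$ interpolation, the $\reg$ gain is lost there, explaining the separate $h^2$ rate in the statement). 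The $T_2$ term also requires some attention, since one must verify that the extra regularity of the \emph{dual} solution, rather than of $f$, is what delivers the improved power; the factor $|f|_{k-1,\E}$ is fixed by the data side of \eqref{estk2}, so the $h^{\reg}$ improvement must enter entirely through the $\psi_I$-dependent factor $\|\psi_I-\Pi^0_{k-2}\psi_I\|_{0,\E}$.
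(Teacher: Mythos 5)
Your proposal takes exactly the paper's route: the same dual problem $D\Delta^2\psi=w-w_h$, the regularity \eqref{regularity-2}, the same three-term splitting $T_1+T_2+T_3$ as in \eqref{per-stima-H1}, and the same term-by-term mechanisms. Two of your variants are harmless or even preferable: your choice $\psi_I\in W^k_h$ is what is actually required for $\psi_I$ to be an admissible test function in \eqref{Vem-pb} (the paper writes $W^3_h$, which for $k>3$ is not a subspace of $V_h$), and your handling of $k=2$ inside the duality argument (with the interpolation order capped at $\beta=3$) is a valid alternative to the paper's shortcut, which simply bounds $\|w-w_h\|_0\le C|w-w_h|_1$ and quotes Theorem \ref{stima-in-H1}.

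There is, however, one concrete slip that leaves your argument short of the claimed rate. In $T_3$ you take $\psi_\pi=\Pi^0_2\psi$, i.e. you project the dual solution onto quadratics. Approximation by $\Pp_2$ in the $H^2$-seminorm saturates at order $h$ no matter how smooth $\psi$ is: $\|\psi-\Pi^0_2\psi\|_{2,\E}\le C h_\E|\psi|_{3,\E}$ and nothing better is possible, since a degree-$2$ polynomial cannot exploit regularity beyond $H^3$. Hence the factor multiplying $\|w_h-\wp\|_{2}\sim h^{k-1}$ in $T_3$ is only $O(h)$, so your $T_3$ is $O(h^k)$ rather than $O(h^{k+\reg})$, and the final estimate would degrade accordingly; the ``sharper interpolation of $\psi$'' cannot enter through $\psi_\pi$ unless its polynomial degree is raised. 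The fix is immediate, and is what the paper implicitly does behind the words ``proceeding exactly as for \eqref{term-T3}'': since $k\ge 3$, one may take $\psi_\pi=\Pi^0_3\psi\in\Pp_3\subseteq\Pp_k$, so that consistency \eqref{basic} still applies and $\|\psi-\psi_\pi\|_{2,\E}\le C h_\E^{1+\reg}\|\psi\|_{3+\reg,\E}$, recovering $T_3\le C h^{k+\reg}|w|_{k+1}\|w-w_h\|_0$. A related but milder imprecision occurs in your $T_2$: the extra power there does not come from the regularity of $\psi$ at all, but from the fact that for $k\ge 3$ the projector $\Pi^0_{k-2}$ reproduces linears, whence $\|\psi_I-\Pi^0_{k-2}\psi_I\|_{0,\E}\le\|\psi_I-\Pi^0_1\psi_I\|_{0,\E}\le C h_\E^2|\psi|_{2,\E}$; this gives $T_2=O(h^{k+1})\le O(h^{k+\reg})$, using only $H^2$-control of $\psi$.
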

\begin{proof}
We shall treat only the cases $k\ge 3$, the reason being that, if $\beta$ is the order of convergence in $H^2$, the expected order in $L^2$ is given by $\min\{2\beta, \beta+2\}$. Hence, for $k=2$ we can expect not more than order $2$, which is a direct consequence of the $H^1-$estimate \eqref{stima-H1}: 
\begin{equation}
\mbox{for } k=2, \quad \|w-w_h\|_0\le C\,|w-w_h|_1 \le C h^2 \Big( |w|_{3} +(\sum_{\E\in\Th} |f|_{1,\E}^2)^{1/2}\Big).
\end{equation}
 Let then $k\ge 3$, and let $\psi \in H^2_0(\Omega)$ be the solution of \eqref{dual-general} with $g=w-w_h$:
\begin{equation}\label{dual-L2}
D \Delta^2 \psi = w-w_h \qquad \mbox{in } \O.
\end{equation}
By the regularity assumption \eqref{regularity-2} we have
\begin{equation}\label{reg-psi-L2}
\|\psi\|_{3+\reg} \le C\,  \|w-w_h\|_0 .
\end{equation}
Let $\psi_I$ be the interpolant of $\psi$ in $W^3_h$, for which it holds
\begin{equation}\label{interp-3}
\|\psi -\psi_I\|_m \le C\, h^{3+\reg - m} \|\psi\|_{3+\reg},~~m=0,1,2.
\end{equation}
Then, from \eqref{dual-L2} and proceeding as we did in \eqref{per-stima-H1} we have
\begin{equation}\label{per-stima-L2}
\begin{array}{ll}
\|w-w_h\|^2_0&=(D \Delta^2 \psi ,w-w_h)_0= a(\psi,w-w_h)\\[2mm]
&=a(w-w_h, \psi -\psi_I)+ [(f,\psi_I)- <f_h,\psi_I> ]\\[2mm]
&+ [a_h(w_h,\psi_I)-a(w_h,\psi_I)] =: T_1 + T_2 + T_3.
\end{array}
\end{equation}
The rest of the proof follows exactly the steps used for proving Theorem \ref{stima-in-H1}. 
Thus, from \eqref{stima-H2}, \eqref{interp-3} and \eqref{reg-psi-L2},
\begin{equation}\label{term-T1-L2}
T_1
\le C h^{k-1} \|w\|_{k+1} h^{1+\reg}  \|\psi\|_{3+\reg} 
\le C h^{k+\reg} \|w\|_{k+1} \|w-w_h\|_0.
\end{equation} 
For the term $T_2$ we use  again \eqref{estk2} with $v_h=\psi_I$, which now gives
\begin{equation*}
\|\psi_I-\Pi^0_{k-2}\psi_I\|_{0,\E} \le \|\psi_I-\Pi^0_{1}\psi_I\|_{0,\E} 
\le C h^2_{\E}\, |\psi|_{2,\E} ,
\end{equation*} 
so that
\begin{equation}\label{term-T2-L2} 
T_2 
\le C \,h^{k+1} \displaystyle{(\sum_{\E\in\Th} |f|_{k-1,\E}^2)^{1/2} \,  \|w-w_h \|_{0}} .
\end{equation}
Finally, proceeding exactly as for \eqref{term-T3},
\begin{equation}\label{term-T3-L2}
T_3
 \le C\, h^{k-1} |w|_{k+1} h^{1+\reg} \|\psi\|_{3+\reg} 
\le C\, h^{k+\reg}  |w|_{k+1}\|w-w_h\|_0.
\end{equation} 
Collecting \eqref{term-T1-L2}--\eqref{term-T3-L2}  in \eqref{per-stima-L2} gives
\begin{equation*}
\|w-w_h\|_0 \le C\,h^{k+\reg} ( |w|_{k+1} +(\sum_{\E\in\Th} |f|_{k-1,\E}^2)^{1/2})
\end{equation*}
and the proof is concluded.
\end{proof}
\section{Numerical results}\label{num-tests}
In order to assess accuracy and  performance of virtual elements for plates, we present numerical tests using the first two elements of the family here described. 
The corresponding 
 polynomial degree indices, defined in \eqref{degrees-r-s-m},  are $r=3$, $s=1$, $m=-2$ and $r=3$, $s=2$, $m=-1$. Thus, the elements are named VEM31 and VEM32, respectively.
 The degrees of freedom, chosen according to the definitions \eqref{vertices}-\eqref{interior}, are  the values of the displacement and its first derivatives at the vertices  (\eqref{vertices} and  \eqref{gradvertices}) for VEM31, and  the same degrees of freedom \eqref{vertices}-\eqref{gradvertices} plus the moment of order zero of the normal derivative (see \eqref{interior}) for VEM32.
The two elements  are presented in Figure \ref{fig:vem}.
They are the extensions to polygonal elements of two well-known finite elements for plates: the Reduced Hsieh-Clough-Tocher triangle (labelled CLTR),  and 
the Hsieh-Clough-Tocher  triangle (labelled CLT) (see e.g. \cite{Ciarlet-78}), respectively.
As a test problem we solve \eqref{biharm}-\eqref{BC} with $\Omega=$ unit square and $f$ chosen to have as exact solution the function $w_{ex}=x^2(x-1)^2y^2(y-1)^2$.
As a first test, we compare the behaviour of virtual and finite elements; for this we take a sequence of uniform meshes of $N\times N\times 2$ equal right triangles ($N=4,8,16,32$), and
we plot the convergence curves of the error in $L^2$, $H^1$ and $H^2$ 
produced by the virtual elements  VEM31 and VEM32,  and the finite elements CLTR and CLT respectively.
Figure \ref{fig:errorVEM-FEM} shows the relative errors in $L^2,~H^1$ and $H^2$ norm against  the mesh diameter ($h=0.3536,~h=0.1768,~h=0.0884,~h=0.0442$). The convergence rates are obviously the same, although in all cases the virtual elements seem to perform a little better.
Next, we test the behaviour of the virtual elements on a sequence of random Voronoi polygonal tessellations of the unit square in  25, 100, 400, 1600 polygons with mean diameters $h=0.3071, ~h=0.1552, ~h=0.0774, ~h=0.0394$ respectively. (Figure \ref{fig:mesh}  shows the  100 and 1600-polygon meshes). 
In Figure \ref{fig:errorVEMv} we compare the convergence curves in $L^2,~H^1$ and $H^2$ norm   obtained using the virtual elements VEM31 and VEM32 on the Voronoi meshes and  on uniform triangular meshes.
\begin{figure}[ht!]
\centering
\includegraphics[width=3.8cm]{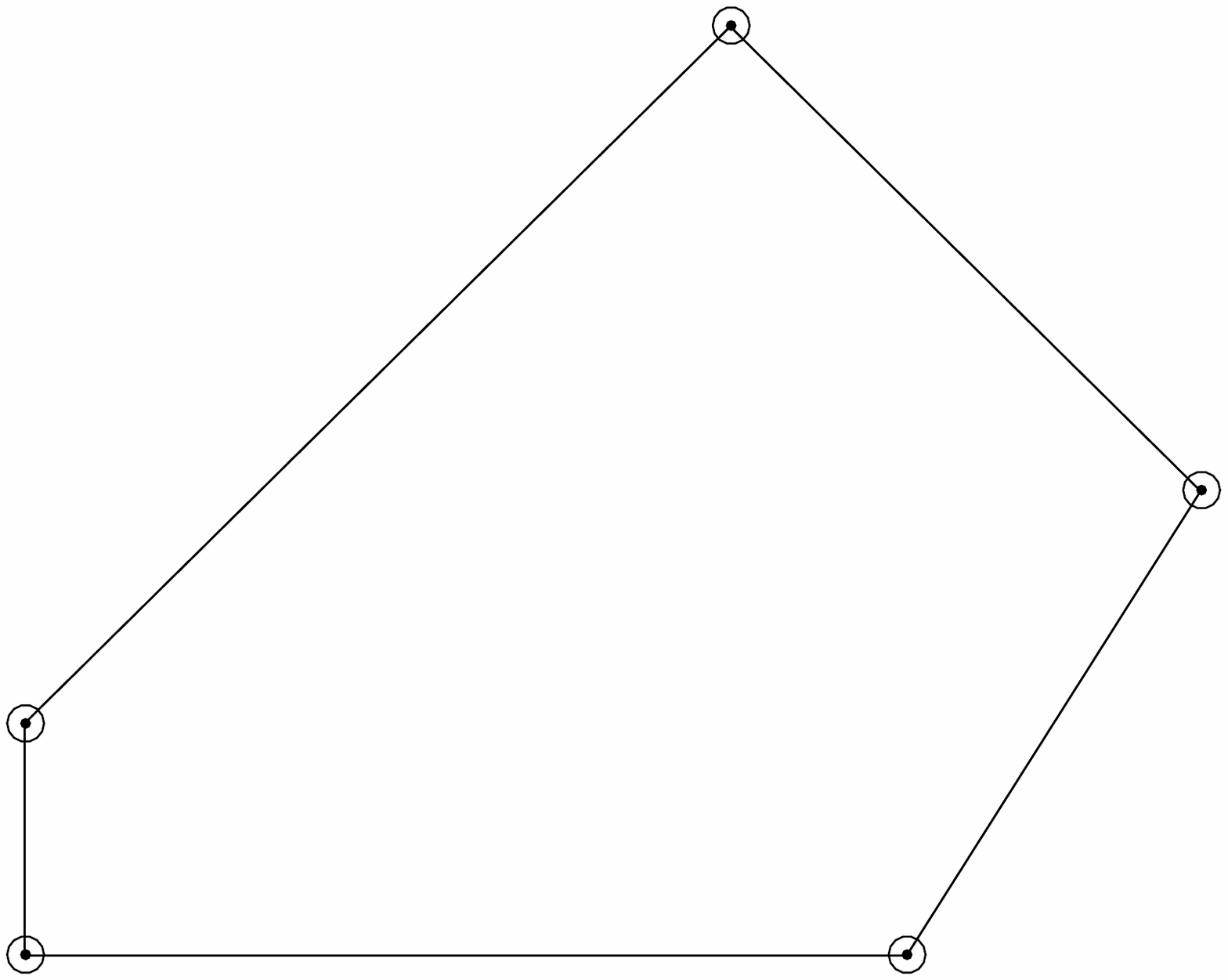}
\hskip1.cm
\includegraphics[width=3.8cm]{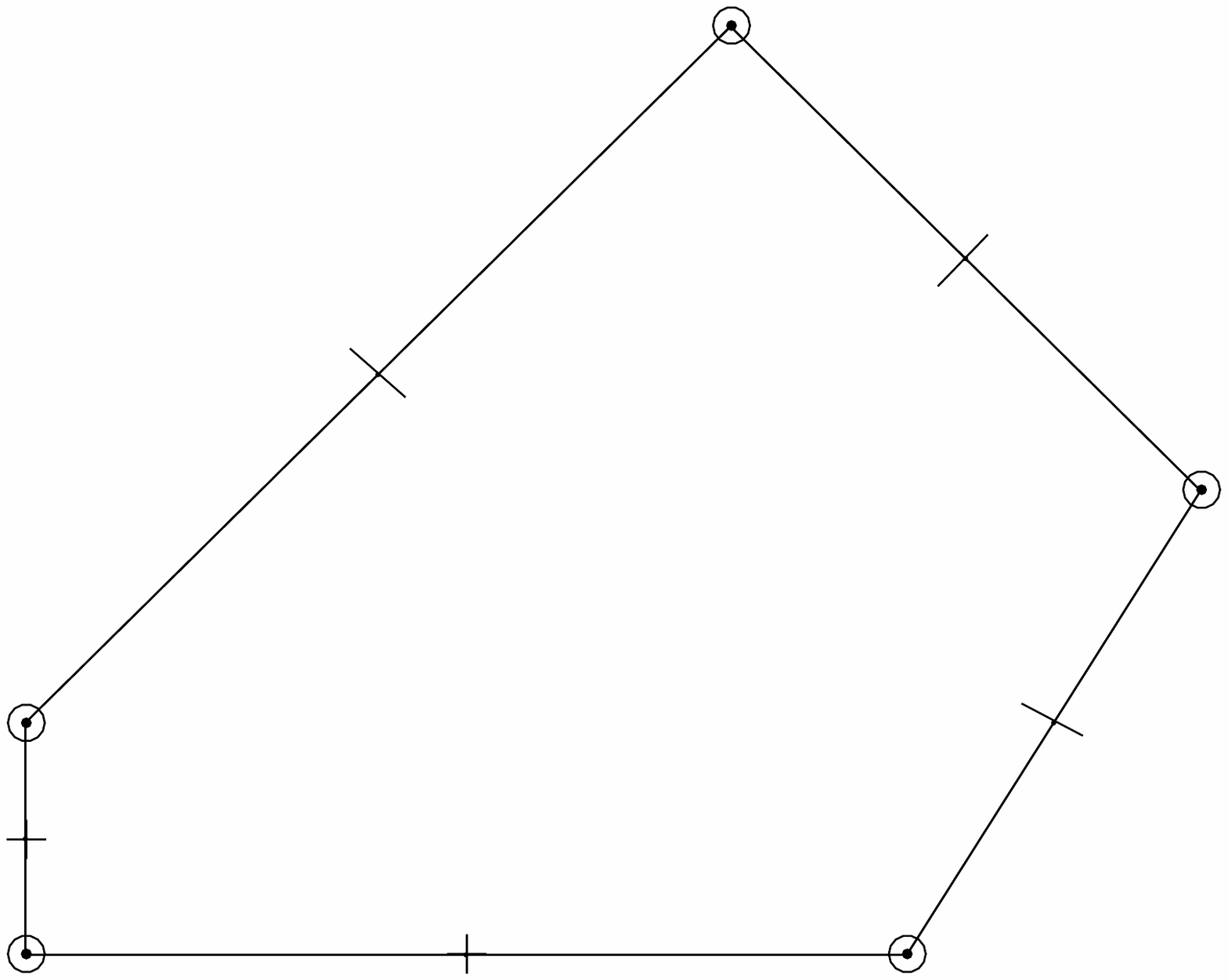}
\caption{VEM31 element on the left, VEM32 element on the right}
\label{fig:vem}
\centering
\includegraphics[width=5cm,height=4.3cm]{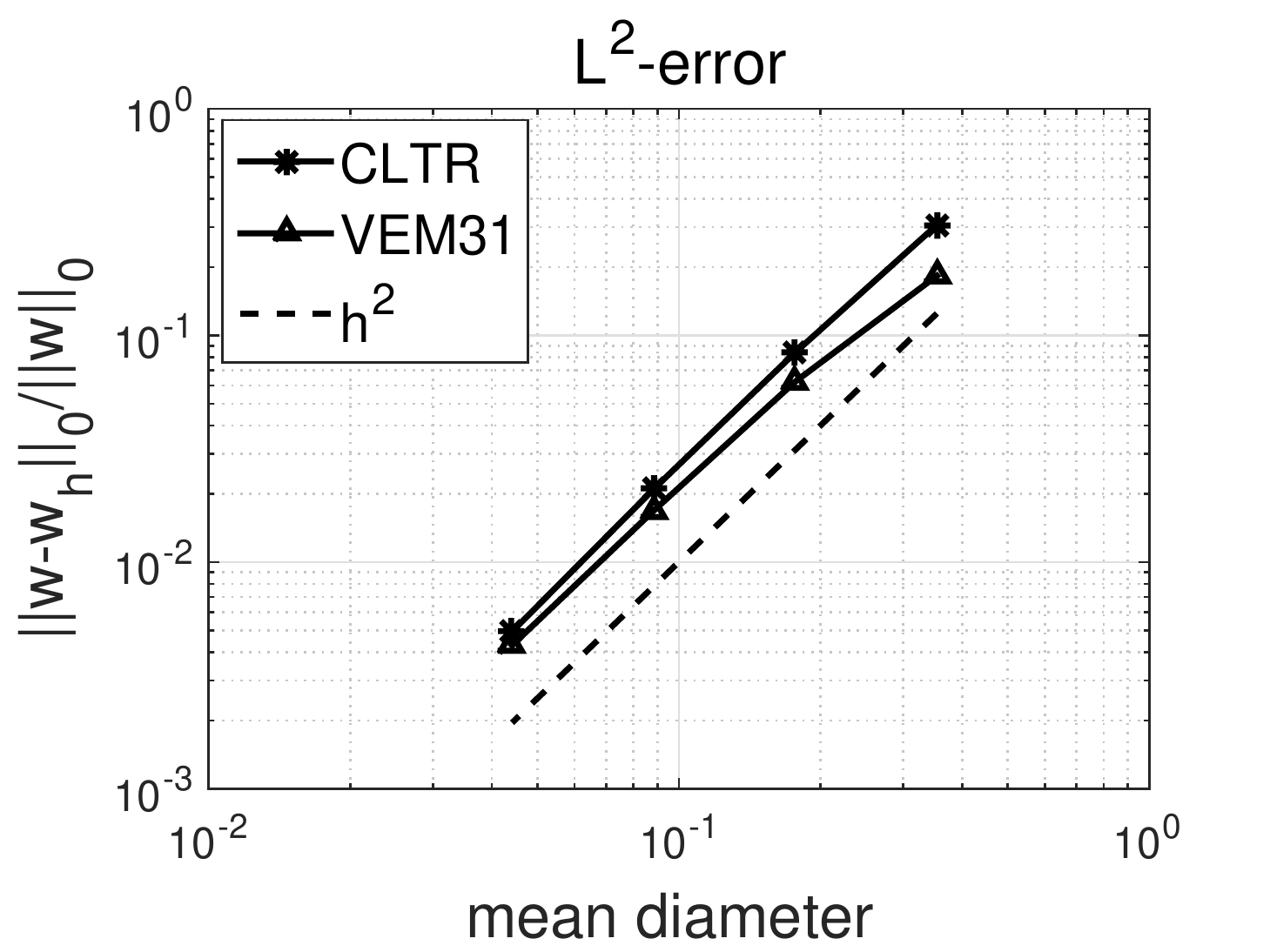}
\hskip0.3cm
\includegraphics[width=5cm,height=4.3cm]{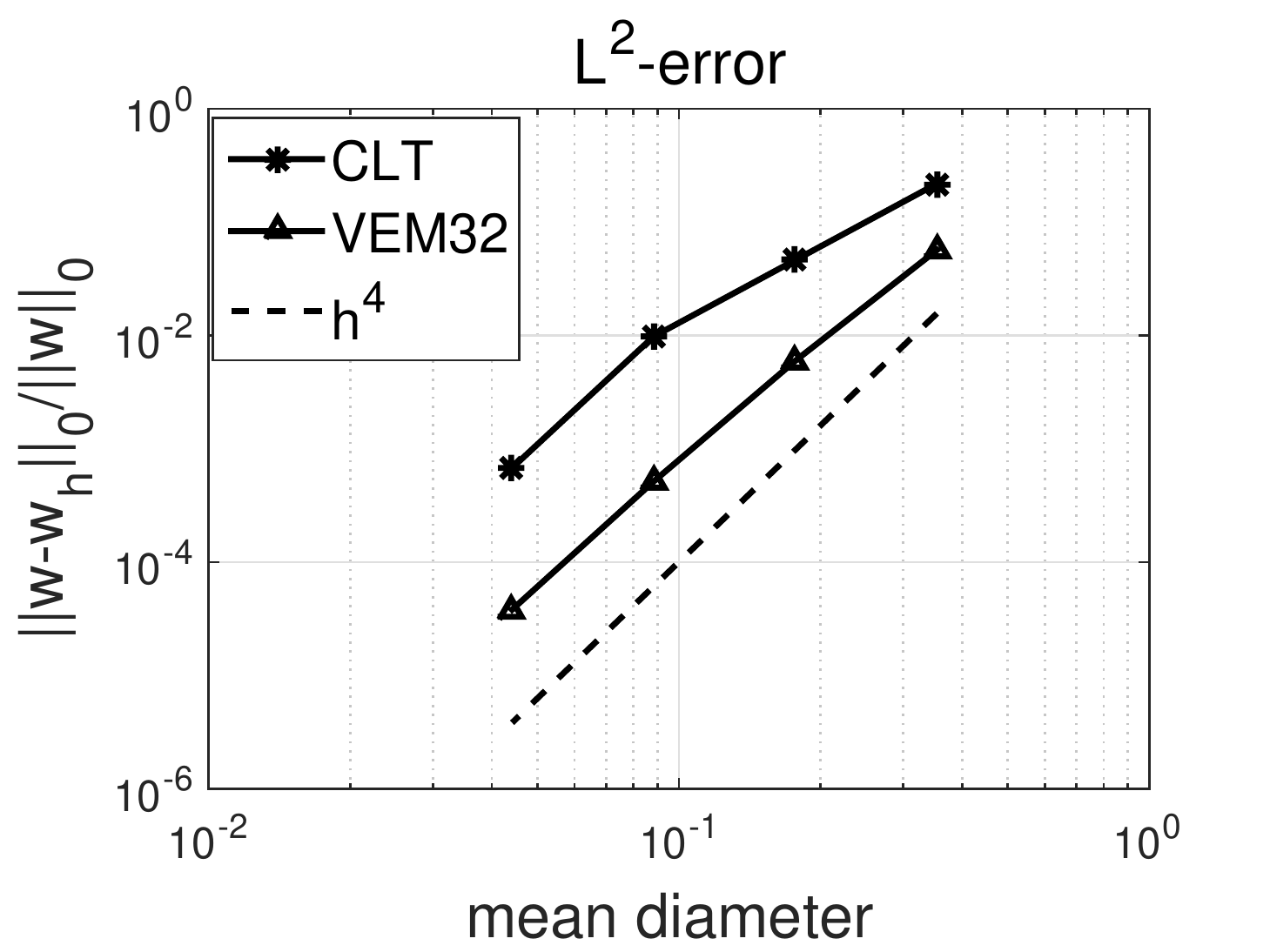}
\vskip0.3cm
\includegraphics[width=5cm,height=4.3cm]{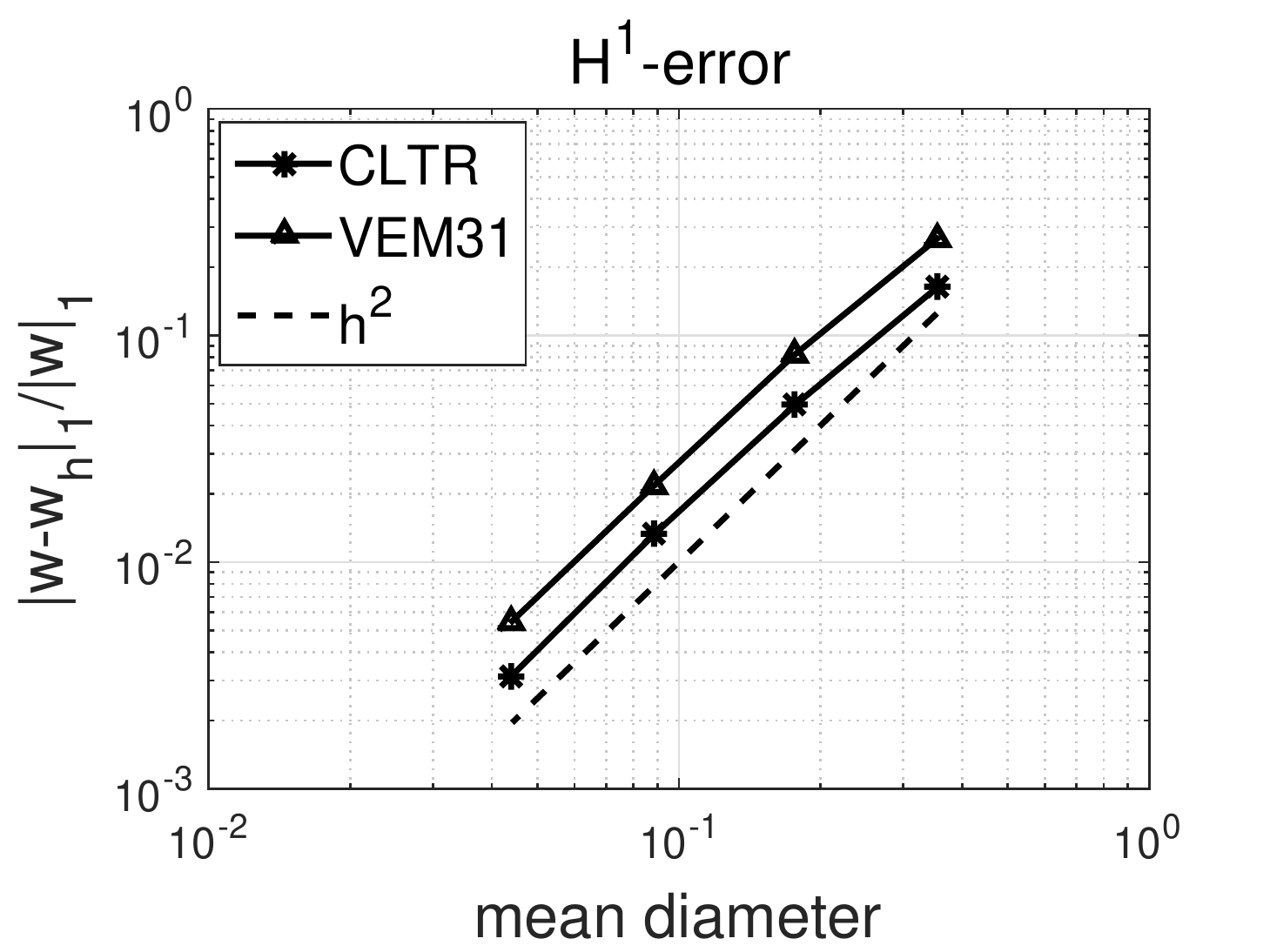}
\hskip0.3cm
\includegraphics[width=5cm,height=4.3cm]{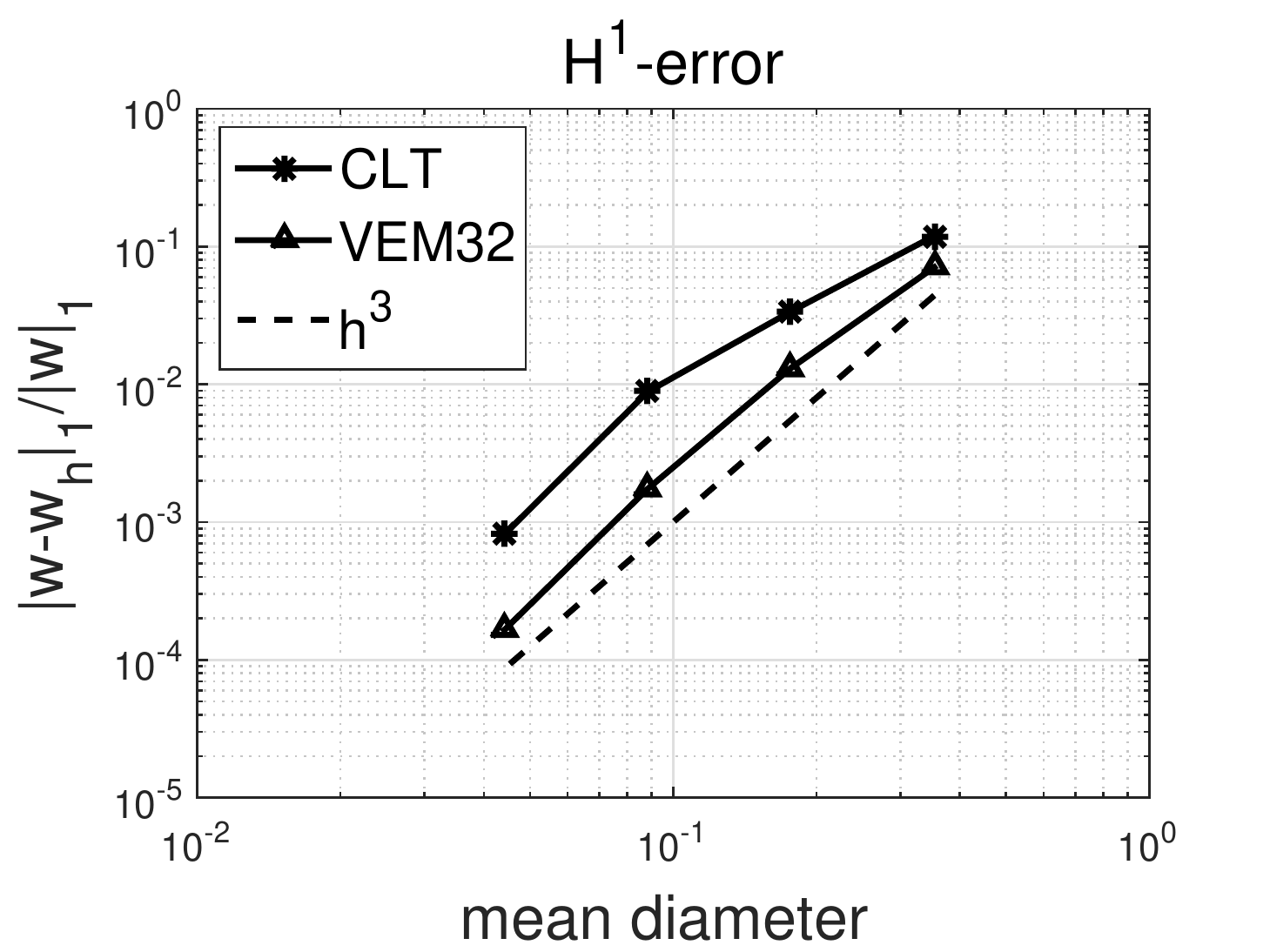}
\vskip0.3cm
\includegraphics[width=5cm,height=4.3cm]{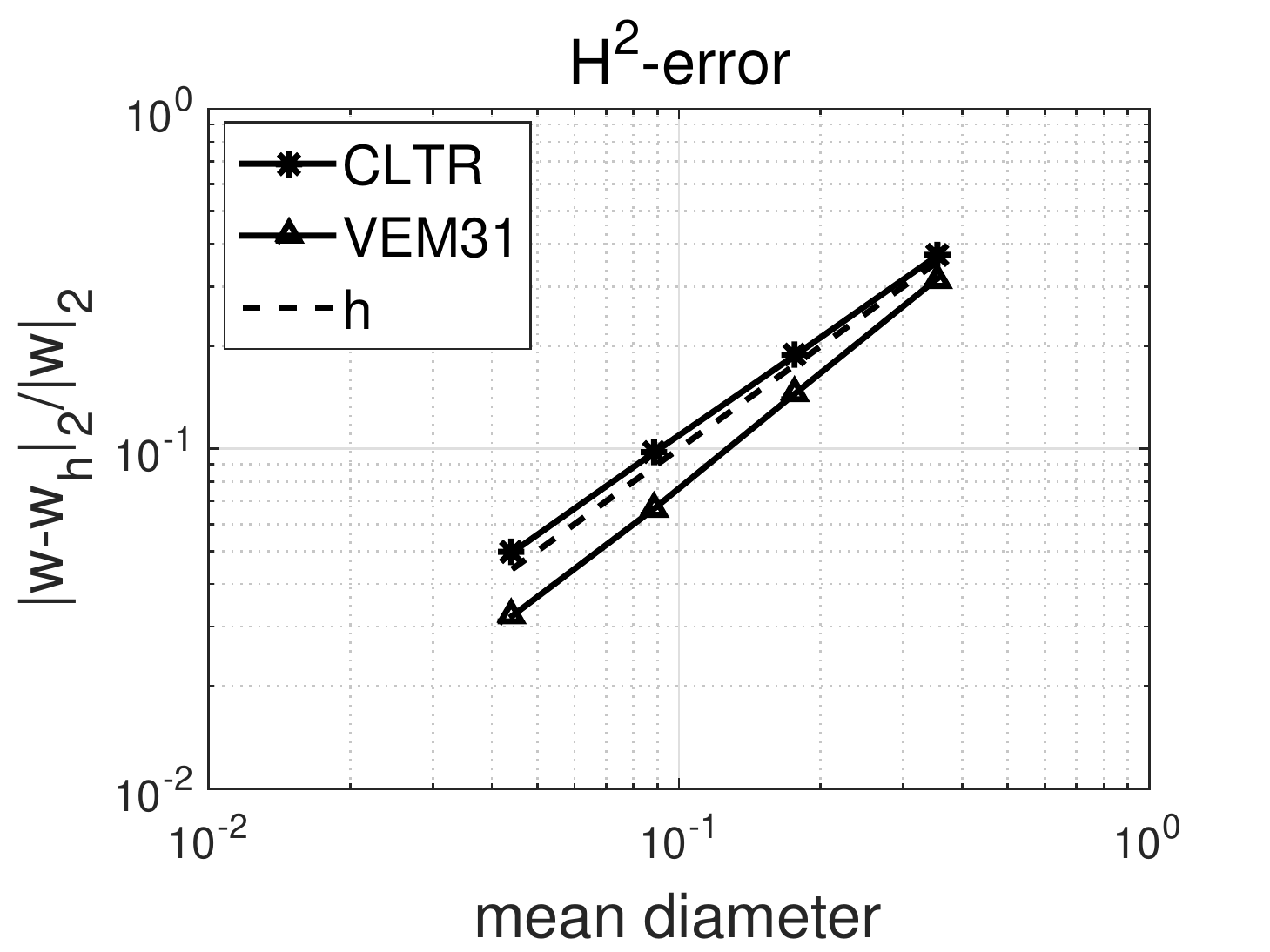}
\hskip0.3cm
\includegraphics[width=5cm,height=4.3cm]{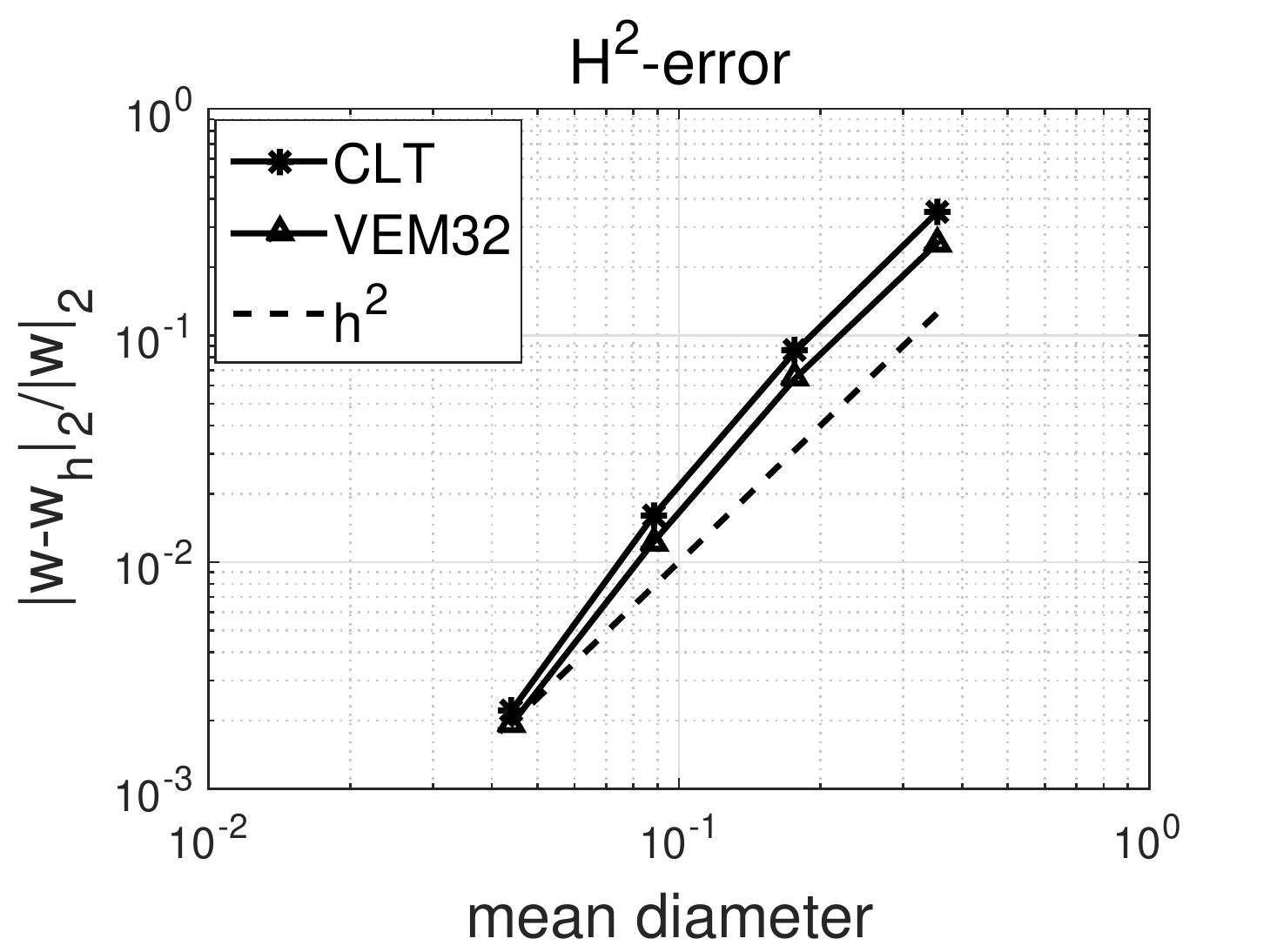}
\caption{Virtual elements compared with the corresponding finite elements. Left: VEM31 and CLTR. Right: VEM32 and CLT }
\label{fig:errorVEM-FEM}
\end{figure}
\begin{figure}[ht!]
\centering
\includegraphics[width=3.5cm]{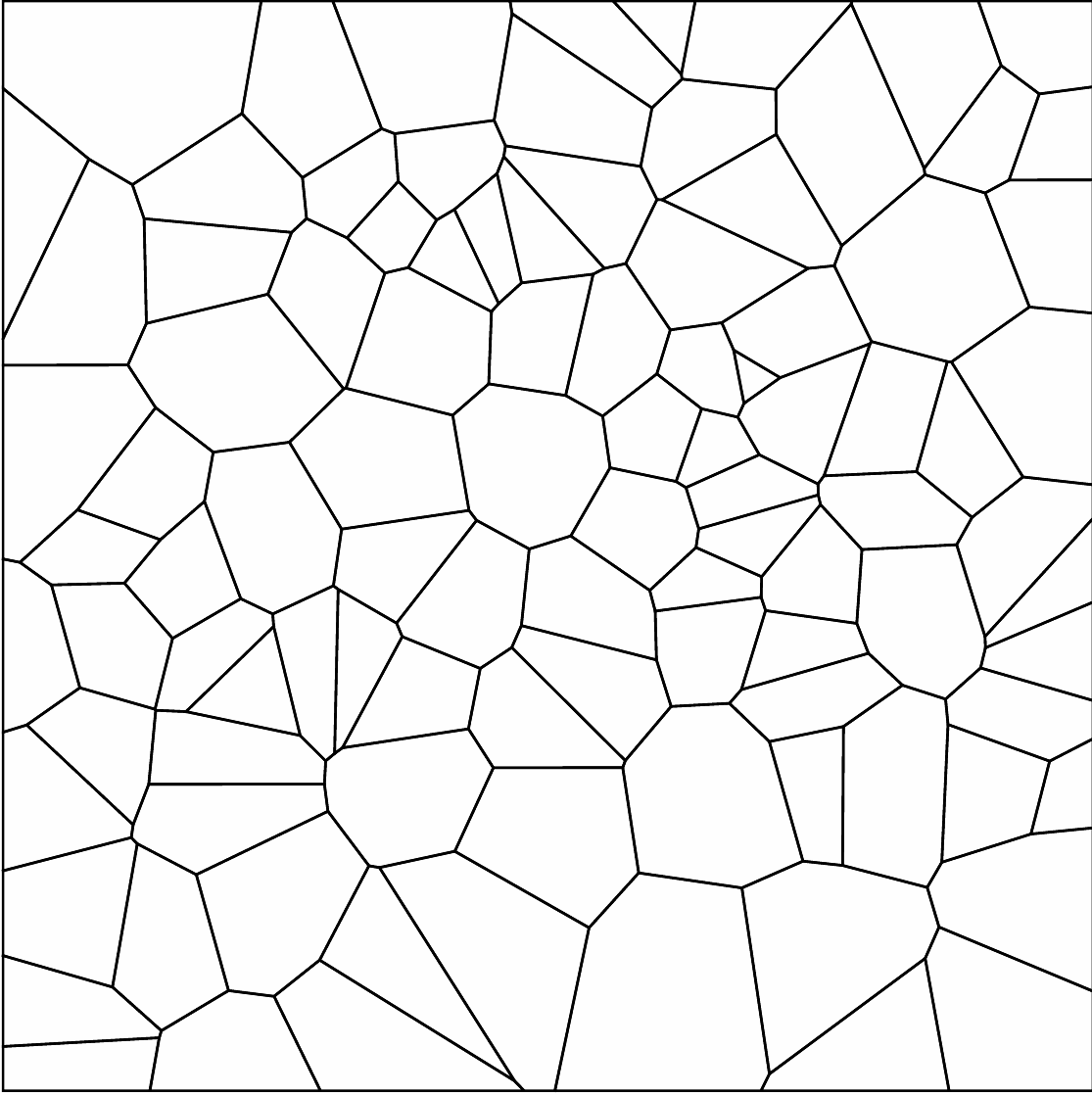}
\hskip1.cm
\includegraphics[width=5.5cm]{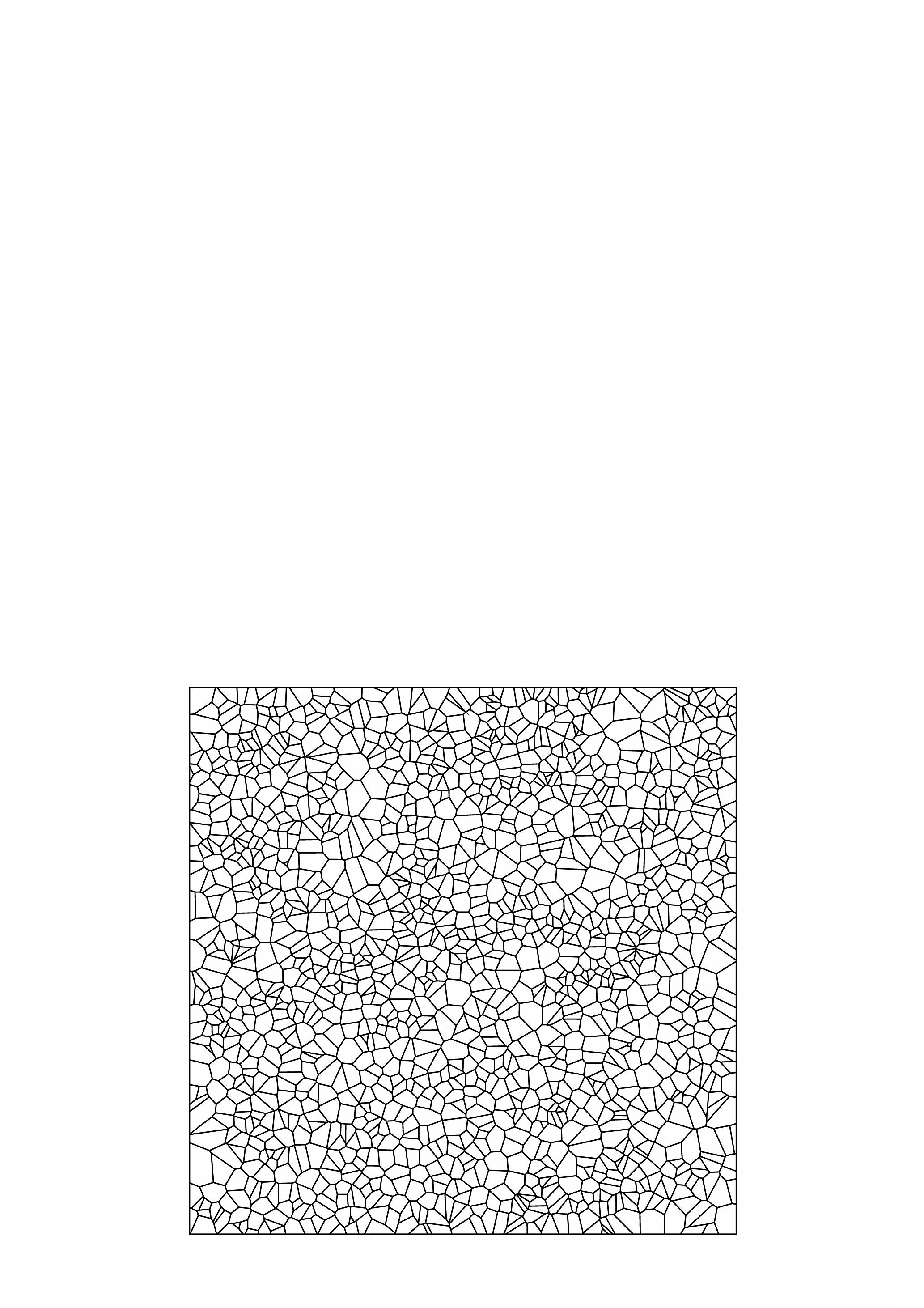}
\caption{100 and 1600-polygons  Voronoi mesh}
\label{fig:mesh}
%
%
\centering
\includegraphics[width=5cm,height=4.3cm]{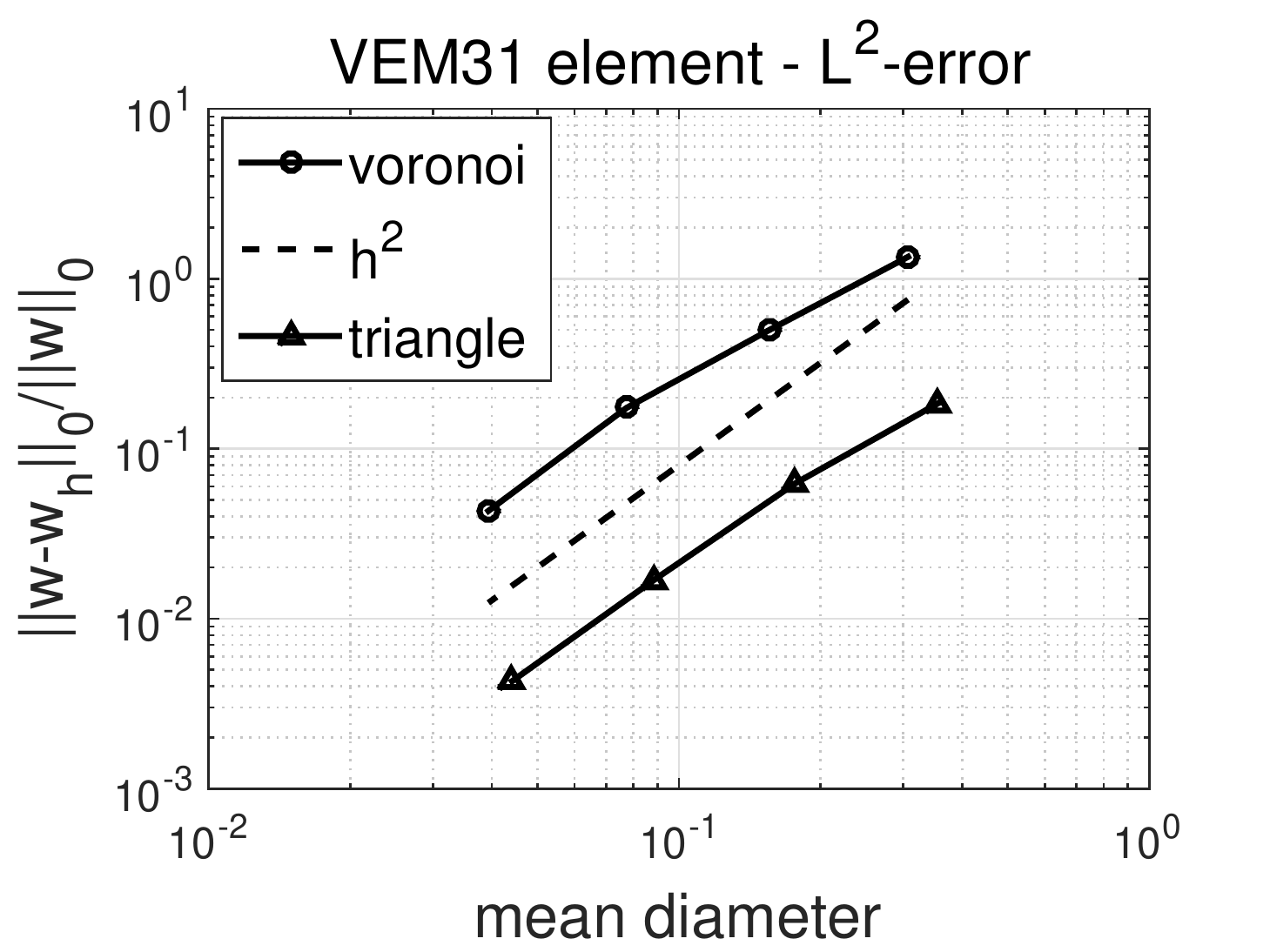}
\hskip0.3cm
\includegraphics[width=5cm,height=4.3cm]{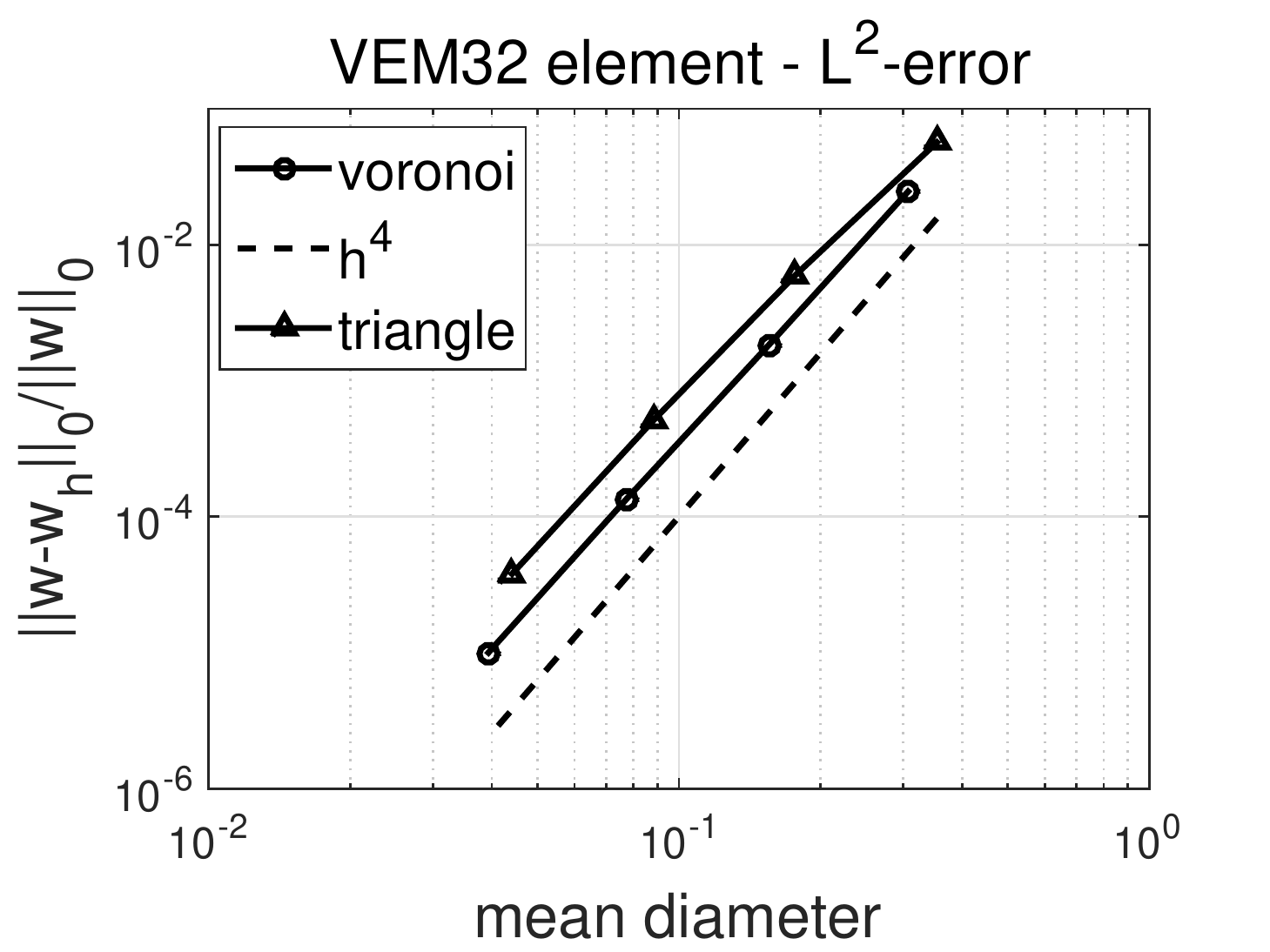}
\vskip0.3cm
\includegraphics[width=5cm,height=4.3cm]{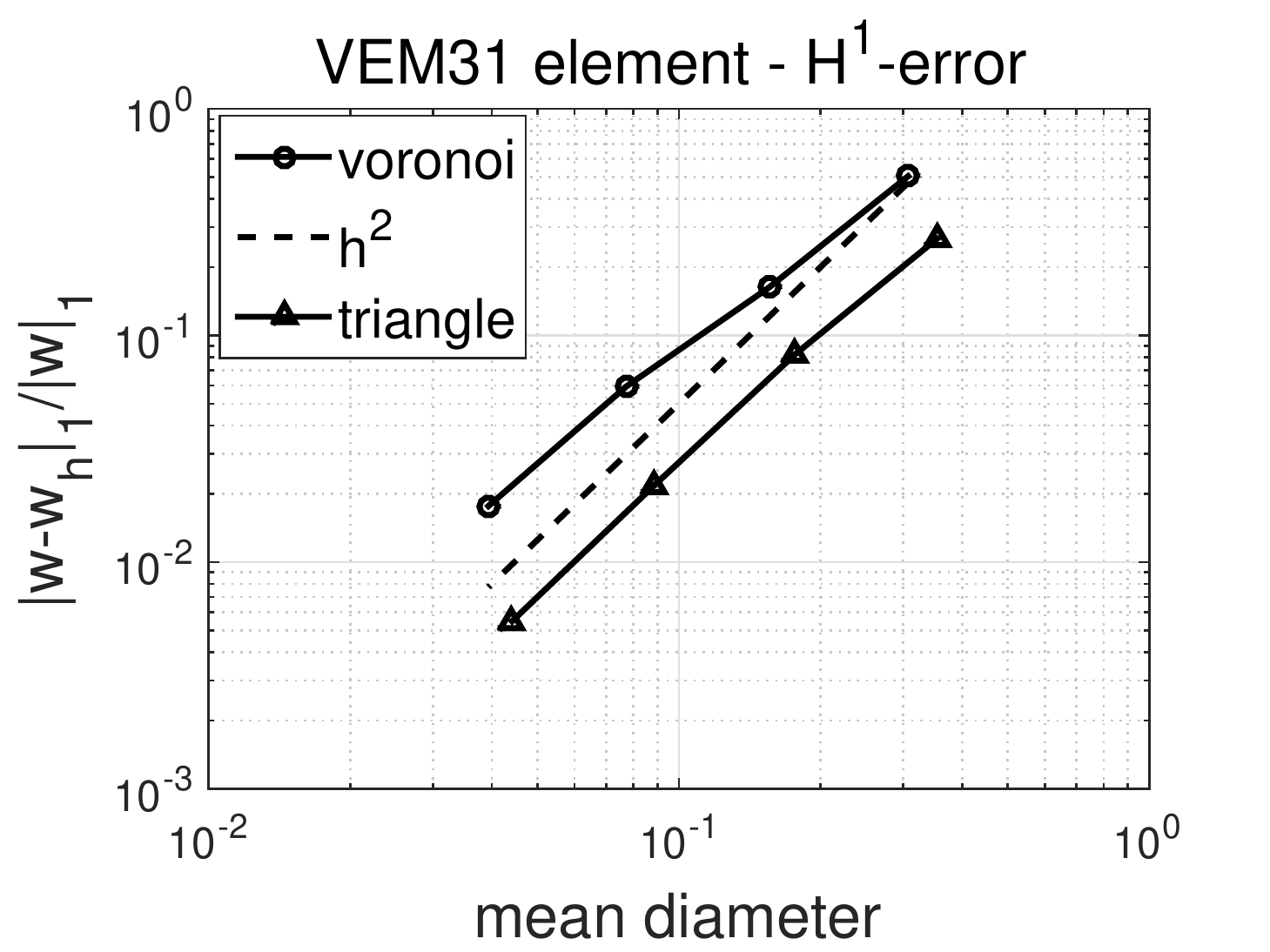}
\hskip0.3cm
\includegraphics[width=5cm,height=4.3cm]{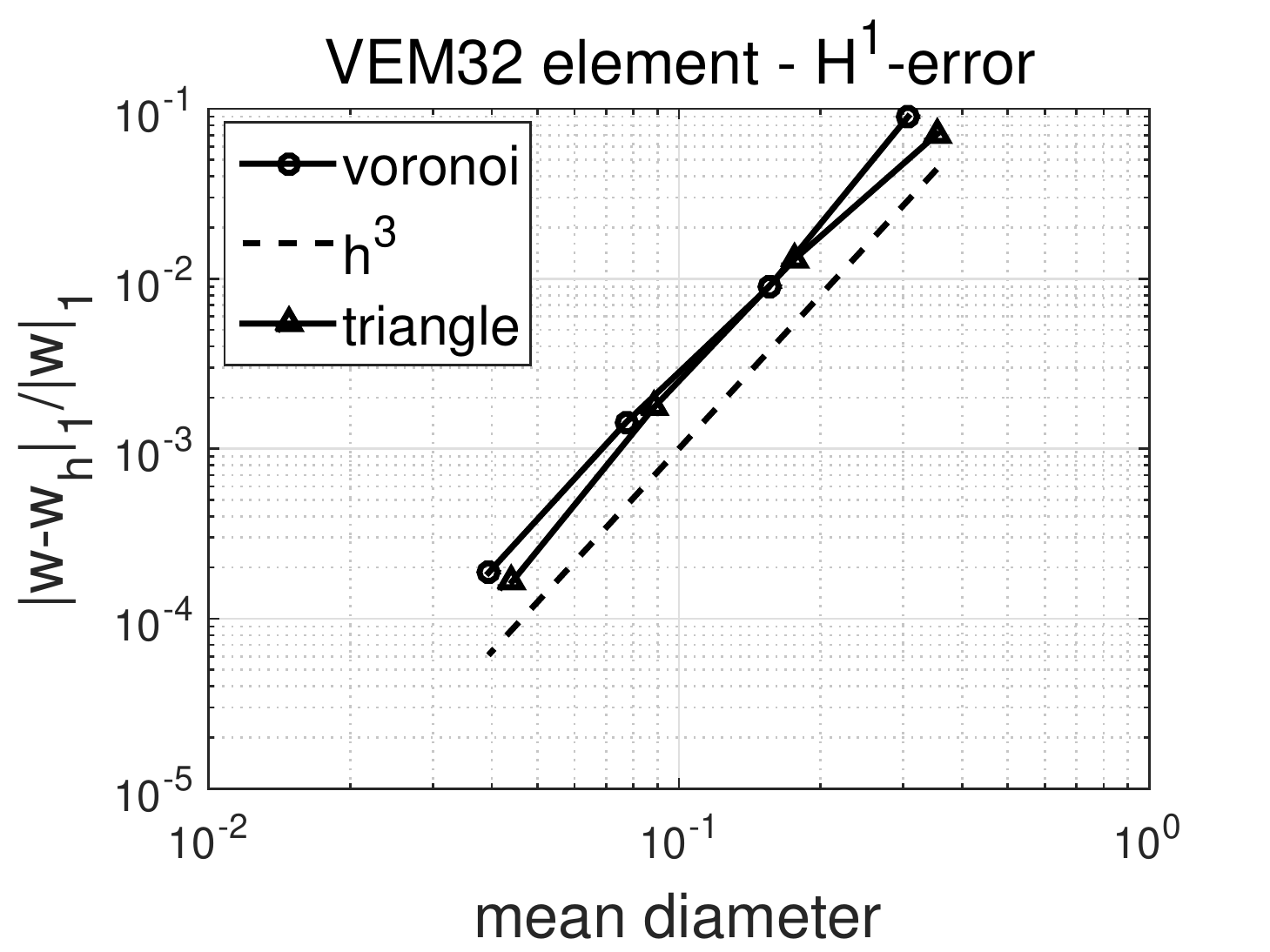}
\vskip0.3cm
\includegraphics[width=5cm,height=4.3cm]{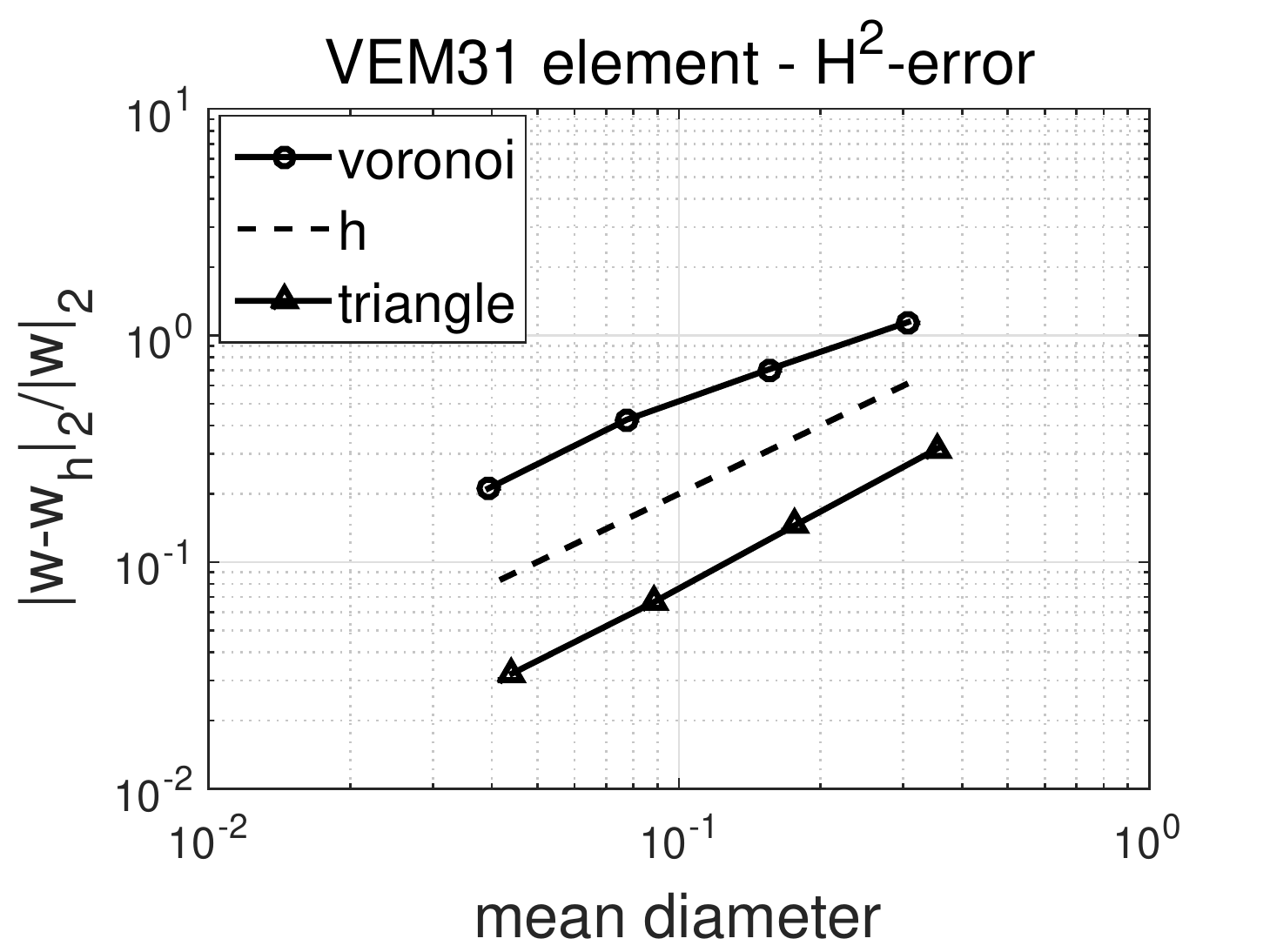}
\hskip0.3cm
\includegraphics[width=5cm,height=4.3cm]{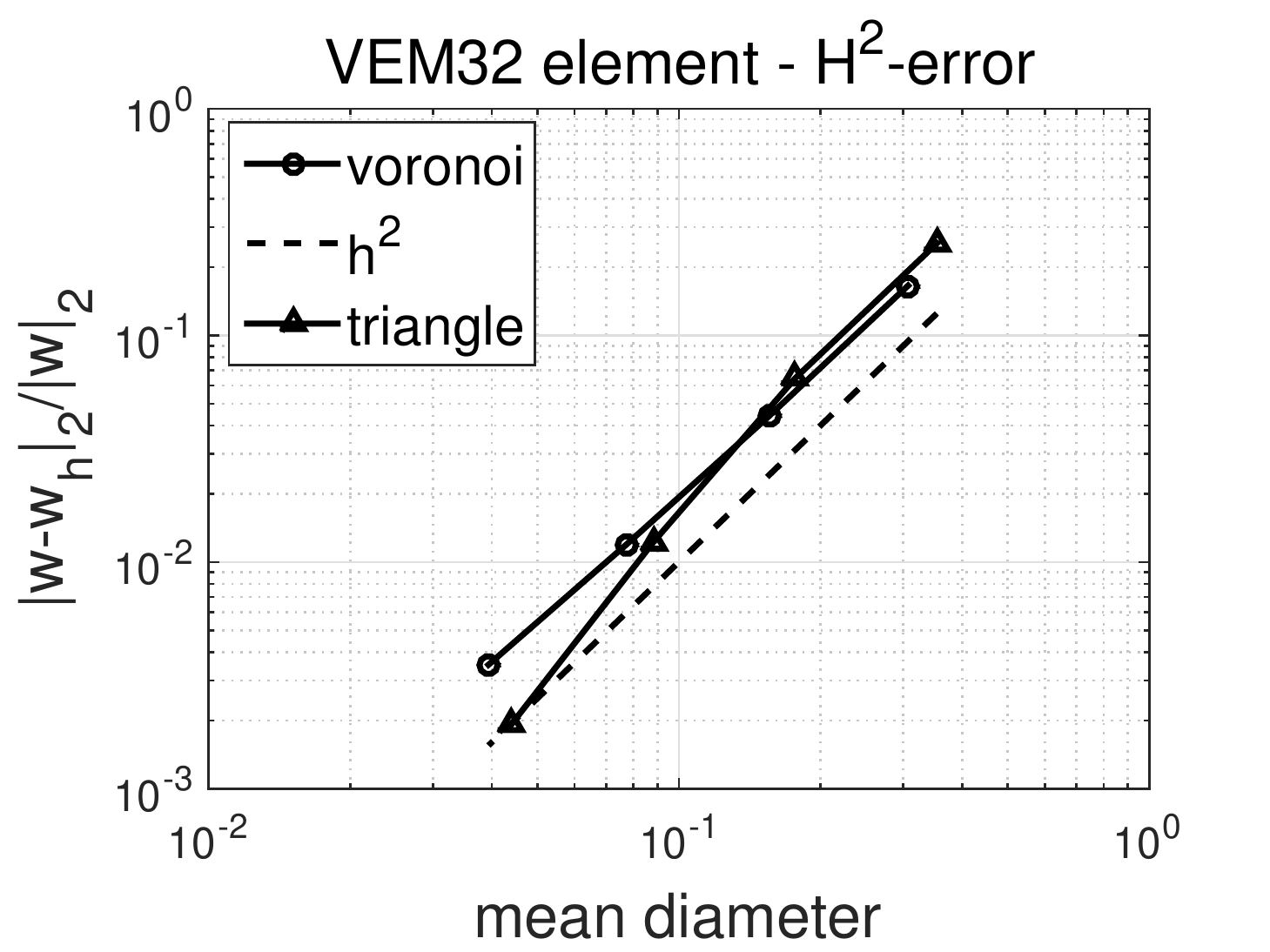}
\caption{Virtual elements on different meshes. Left: VEM31 element. Right: VEM32 element}
\label{fig:errorVEMv}
\end{figure}
\vfill
\eject

\bibliographystyle{amsplain}

\bibliography{general-bibliography}

\end{document}